\newtheorem{theorem}{Theorem}[section]
\newtheorem{lemma}[theorem]{Lemma}
\newtheorem{remark}[theorem]{Remark}
\newenvironment {proof} {{\it Proof.}}{\hspace*{\fill}$\Box$\par\vspace{4mm}}
\newcommand*{\QED}{\hfill\ensuremath{\square}}%
\newcommand{\mc}{\mathcal}
\newcommand{\mb}{\mathbb}
\title{A numerical method for solving stochastic differential equations with noisy memory}
\author{Kristina R. Dahl
\thanks{Department of Mathematics, University of Oslo. P.O box 1053, Blindern, 0316 OSLO, Norway. Email: kristrd@math.uio.no. Phone: +47-22854183}}
\begin{document}

\maketitle

\begin{abstract}
Stochastic differential equations with noisy memory are often impossible to solve analytically. Therefore, we derive a numerical Euler-Maruyama scheme for such equations and prove that the mean-square error of this scheme is of order $\sqrt{\Delta t}$. This is, perhaps somewhat surprisingly, the same order as the Euler-Maruyama scheme for regular SDEs, despite the added complexity from the noisy memory. To illustrate this numerical method, we apply it to a noisy memory SDE which can be solved analytically.
\end{abstract}

\bigskip

\textbf{Key words:} Euler method $\cdot$ Stochastic differential equation $\cdot$ Noisy memory $\cdot$ Mean-square convergence

\medskip

\textbf{MSC classification codes:} 65C30 $\cdot$ 60H35 $\cdot$ 60H10 $\cdot$ 65C20

\section{Introduction}

In this paper, we study how noisy memory stochastic differential equations (SDEs), introduced in Dahl et al.~\cite{Dahl}, are connected to Volterra equations. We also discuss existence and uniqueness of solutions to noisy memory SDEs. Since such equations usually can not be solved analytically, we derive an Euler-Maruyama scheme for a numerical approximation of the solution. We prove that this scheme has mean square order of convergence $\sqrt{\Delta t}$.

One should note the following unique features of the analysis:

\begin{itemize}
\item{The stochastic differential equation (SDE) is driven by {\it generalized noisy memory}: The evolution of the state $X$ at any time $t$ is dependent on its past history
$\int_{t-\delta}^t \phi(t,s) X(s) \, dB(s)$ where $\delta$ is the memory span and $dB$ is white noise.}
\item{Noisy memory SDEs where the memory does not include a time-dependent function can be rewritten as two dimensional SDEs with delay (see Dahl et al.~\cite{Dahl}). Hence, one may solve such equations using numerical methods for delay SDEs, see e.g., Buckwar~\cite{Buckwar}, Carletti~\cite{Carletti}, Mao and Sabanis~\cite{MaoSabanis} and Milstein and Tretyakov~\cite{Milstein}. However, scaling the memory by a time-dependent function implies that generalized noisy memory SDEs cannot be rephrased as SDEs with delay. To the best of our knowledge, no current numerical methods work for approximating the solutions of generalized noisy memory SDEs. However, our numerical scheme works for all noisy memory SDEs, including the generalized ones.}
\item{We prove that the Euler-Maruyama scheme has mean square order of convergence $\sqrt{\Delta t}$. This is the same as the Euler-Maruyama method for classical SDEs. Hence, the added complexity from the noisy memory in the SDE does not reduce the order of convergence of the Euler-Maruyama scheme.}
\end{itemize}

Noisy memory SDEs can be applied to model animal populations where the population growth depends in some stochastic way on the previous population states, as well as the current number of animals. This kind of memory effect can be useful in the modeling of species where there is a natural delay in the population growth caused by e.g., hatching of eggs for fish, or larva becoming butterflies. This delay may depend on time, such as seasonal weather effects. This motivates generalized noisy memory SDEs. For applications of stochastic delay equations connected to population dynamics, see~\cite{Kuang}. Other applications of stochastic delay equations include spread of infectious diseases, see Beretta et al.~\cite{BerettaEtAl}, applications in physics and engeneering, see Kolmanovskii and Myshkis~\cite{Kolmanovskii} and financial applications, see {\O}ksendal and Sulem~\cite{OksendalSulem}.

Stochastic systems with memory, and the related field of stochastic systems with delay, has been an important field of research over the last years, see for example Mohammed and Zhang~\cite{MohammedZhang}, Mohammed~\cite{Mohammed_Memory} and {\O}ksendal, Sulem and Zhang~\cite{OSZ1}. Introducing a noisy memory $Z(t)$, as opposed to a perfect memory, is a natural generalization of this research. Verriest and Florchinger~\cite{Verriest}, Verriest~\cite{Verriest2} and Verriest and Michiels~\cite{VerriestMichiels} all consider stochastic delay differential equations and derive corresponding stability results on the solution. Li and Cao~\cite{Li} derive a two-step Euler-Maruyama method for a nonlinear neural stochastic delay differential equation.

%An application of the theory of noisy memory SDEs to an optimal harvesting problem can be found in Dahl et al.~\cite{Dahl}. In this example, the state of some system is governed by a noisy memory SDE. For instance, such a system could be the previously mentioned animal populations with is a (stochastic) delay effect. The solution of such an SDE represents the development of the animal populations. See also Buckwar~\cite{Buckwar} and Carletti~\cite{Carletti} for more examples.

%We show that noisy memory SDEs are in general as least as difficult to solve as stochastic Volterra equations. Therefore, noisy memory stochastic differential equations are very difficult to solve analytically. This justifies the need for numerical methods to approximate the solution. In this paper, we construct an Euler type approximation scheme and derive a bound for the error. There are many papers on Euler methods for regular SDEs (without noisy memory) with various requirements on the coefficient functions, see e.g., Mao~\cite{Mao} and Highham et al.~\cite{HighhamEtAl}

The structure of the paper is as follows: In Section~\ref{sec: NoisySDEandEuler} we introduce the noisy memory SDE, show a connection between noisy memory SDEs and stochastic Volterra equations and give the Euler scheme to approximate the solution of the noisy SDE. Then, in Section~\ref{sec: main_result} we state our main result on the order of convergence of the Euler method and prove several lemmas needed to prove this. In Section~\ref{sec: proof_main}, we complete the proof of the main theorem. Finally, in Section~\ref{sec: num_ex}, we derive an analytical solution to a noisy SDE, and give a numerical example illustrating the convergence of the Euler method.

\section{The noisy memory SDE and the Euler scheme}
\label{sec: NoisySDEandEuler}

In this section, we introduce a stochastic differential equation with noisy memory, and derive the corresponding Euler scheme.

Let $B_t(\omega) = B(t, \omega); (t, \omega) \in [-\delta, \infty) \times \Omega$ be a Brownian motion on a complete filtered probability space $(\Omega, \mc{F}, \{\mc{F}_t\}_{t \geq 0}, P)$. We assume that $\mb{F} := \{\mc{F}_t\}_{t \geq 0}$ is the filtration generated by $\{B_t\}_{t \geq 0}$ (augmented with the $P$-null sets).

Let $\delta > 0$ be a time-delay determining the length of the memory-interval. Also, consider the functions $b : \Omega \times [0,T] \times \mb{R} \times \mb{R} \rightarrow \mb{R}$, $\sigma : \Omega \times [0,T] \times \mb{R} \times \mb{R} \rightarrow \mb{R}$, $\xi : [0,T] \rightarrow \mb{R}$ and $\phi :[0,T] \times [0,T] \rightarrow \mb{R}$. We will develop a numerical method for the following stochastic differential equation with noisy memory:

\begin{equation}
\label{eq: noisy_mem_SDE}
\begin{array}{llll}
dX(t) &=& b(t,X(t),Z(t))dt + \sigma(t,X(t),Z(t))dB(t), \mbox{ } t \in (0,T] \\[\smallskipamount]
X(t) &=& \xi(t), \mbox{ } t \in [-\delta,0],
\end{array}
\end{equation}
\noindent where the stochastic process
\begin{equation}
\label{eq: Z}
Z(t) := \int_{t-\delta}^t \phi(t, s)X(s) dB(s)
\end{equation}
\noindent is the (generalized) \emph{noisy memory} of $X(t)$, see also Dahl et al.~\cite{Dahl}. If $\phi(t,s)=1$ for all $t,s \in [0,T]$, equation~\eqref{eq: noisy_mem_SDE} is a non-generalized (or regular) noisy memory SDE. For more on stochastic differential equations in general, see for instance {\O}ksendal~\cite{Oksendal}. 

The parameter $\delta$ is the memory parameter which gives the length of the memory-interval. Note that the memory is noisy due to the It{\^o} integral in the definition. Intuitively, this means that the system does not have a perfect memory, but a slightly distorted one. The (deterministic) function $\phi$ inside the noisy memory It{\^o} integral allows this noisy memory to vary with time, both the time of the memory, but also the current time.

\begin{remark}
 \label{remark: Buckwar}
  \rm{Buckwar~\cite{Buckwar2} considers the same type of memory as in~\eqref{eq: Z}, but with a deterministic Lebesgue integral instead of a stochastic It{\^o} integral. That is, they consider a deterministic distributed memory instead of a stochastic distributed memory like us.} %Also, Buckwar~\cite{Buckwar2} considers a slightly different numerical method than we do. Instead of analyzing the regular Euler-Maruyama method, Buckwar considers a $\theta$-Maruyama method. In contrast to the Euler method, the $\theta$-Maruyama involves introducing a parameter $\theta \in [0,1]$ weighing the present time market process and the previous time-step market process in the drift term of the numerical method. However, similar to us, Buckwar~\cite{Buckwar2} finds that the convergence of the numerical scheme is of order $\frac{1}{2}$.} \QED
\end{remark}

Note that the Brownian motion is defined for negative times is $[-\delta,0]$. For a detailed presentation of how this is done, see Holden et al.~\cite{Holden} (Section 2.1.1).

\subsection{A connection between noisy memory SDEs and stochastic Volterra equations}
\label{sec: Volterra}

Consider a very simple SDE with noisy memory,

\begin{equation}
 \label{eq: simple_SDE_noisy}
 dX(t) = Z(t)dt = (\int_{t-\delta}^t X(u) dB(u)) dt
\end{equation}

\noindent where $Z$ is given as in equation~\eqref{eq: Z} with $\phi(t,s)=1$ for all $t,s \in [0,T]$. By changing the order of integration in \eqref{eq: simple_SDE_noisy}, we see that

\begin{equation}
\label{eq: noisySDE_Volterra_simple}
 \begin{array}{llll}
  X(t) &=& X(0) + \int_0^t \int_{s-\delta}^s X(u) dB(u) ds \\[\smallskipamount]
  &=& X(0) + \int_0^t \int_u^{\min\{u+\delta, t\}} 1 ds X(u) dB(u) \\[\smallskipamount]
  &=& X(0) +  \int_0^t \min\{t-u, \delta\} X(u) dB(u).
 \end{array}
\end{equation}

This is a linear stochastic Volterra equation, see {\O}ksendal and Zhang~\cite{OksendalZhang_Volterra}. Such equations do not have an simple analytical solution. However, they can be solved using an iterative method, see {\O}ksendal and Zhang~\cite{OksendalZhang_Volterra} and \cite{OksendalZhang_linear}.

Now, consider equation~\eqref{eq: noisy_mem_SDE}. If we assume that $b$ may be decomposed as $b(t,X(t),Z(t))= \tilde{b}(t,X(t)) + aZ(t)$, where $a \in \mb{R}$ and $\sigma(t,X(t),Z(t)) = \sigma(t,X(t))$, then we can rewrite equation~\eqref{eq: noisy_mem_SDE} as a stochastic Volterra equation:

\begin{equation}
\label{eq: noisy_mem_SDE_Volterra}
\begin{array}{llll}
X(t) &=& X(0) + \int_0^t b(s,X(s),Z(s))ds + \int_0^t\sigma(s,X(s))dB(s) \\[\smallskipamount]
&=& X(0) + \int_0^t \tilde{b}(s,X(s))ds + a\int_0^t \int_{s-\delta}^s \phi(s,u) X(u)dB(u)ds \\[\smallskipamount]
&&+ \int_0^t\sigma(s,X(s))dB(s) \\[\smallskipamount]
&=& X(0) + \int_0^t \tilde{b}(s,X(s))ds + a\int_0^t \int_{u}^{\min\{u+\delta,t \}} \phi(s,u) ds X(u)dB(u) \\[\smallskipamount]
&&+ \int_0^t\sigma(s,X(s))dB(s) \\[\smallskipamount]
&=& X(0) + \int_0^t \tilde{b}(s,X(s))ds + \int_0^t \{ \tilde{\phi}(t,s) X(s) + \sigma(s,X(s)) \}dB(s) \\[\smallskipamount]
\end{array}
\end{equation}
\noindent where the third equality follows from the same kind of calculations as in~\eqref{eq: noisySDE_Volterra_simple} and $\tilde{\phi}(t,s):= a \int_{s}^{\min\{ s+\delta,t \}} \phi(u,s) du$. This is a stochastic Volterra equation. Conditions for the existence of a unique solution to such equations can be found in e.g., Wang~\cite{Wang}.

The previous argument shows that non-trivial noisy memory SDEs are at least as difficult to solve as stochastic Volterra equations.

\subsection{Existence of solution}

In this section, we prove some results on the existence of a unique solution to equation~\eqref{eq: noisy_mem_SDE}.

\begin{theorem}
 \label{thm: existence_non_general}
In the (non-generalized) case where $\phi(t,s)=1$ for all $t,s \in [0,T]$, the following assumptions on the coefficient functions $b$ and $\sigma$ are sufficient for the existence of a unique solution to equation~\eqref{eq: noisy_mem_SDE}:
\begin{enumerate}
\item[$(i)$] The functions $ b(\omega, t,\cdot)$ and $\sigma(\omega,t, \cdot)$
are assumed to be $C^1$ for each fixed $\omega \in \Omega,t \in [0,T]$.

\item[$(ii)$] The functions  $b(\cdot,x,z)$ and $\sigma(\cdot, x,z)$ are predictable for each $x,z$.

\item[$(iii)$] \emph{Lipschitz condition:} The functions $b$ and $\sigma$ are Lipschitz continuous in the variables $x$ and $z$ with a Lipschitz constant $D$ which is independent of the variables $t, \omega$, i.e.:
\[
 \begin{array}{lll}
  |b(t,x_1,z_1)-b(t,x_2,z_2)| \leq D(|x_1-x_2| + |z_1-z_2|) \\[\smallskipamount]
  |\sigma(t,x_1,z_1)-\sigma(t,x_2,z_2)| \leq D(|x_1-x_2| + |z_1-z_2|).
 \end{array}
\]
\item[$(iv)$] \emph{Linear growth condition:} The functions $b$ and $\sigma$ satisfy the linear growth condition in the variables $x$ and $z$ with the linear growth constant $C$ independent of the variables $t,\omega$, i.e.:
\[
 \begin{array}{lll}
  |b(t,x,z)| + |\sigma(t,x,z)| \leq C(1 + |x| + |z|).
 \end{array}
\]
\end{enumerate}
 
\end{theorem}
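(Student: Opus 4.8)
The plan is to reduce equation~\eqref{eq: noisy_mem_SDE} in the non-generalized case $\phi\equiv 1$ to a standard, two-dimensional stochastic delay differential equation (SDDE) for which the classical existence-and-uniqueness machinery applies, and then invoke that classical result. As already observed in Section~\ref{sec: Volterra} and in Dahl et al.~\cite{Dahl}, when $\phi(t,s)=1$ the noisy memory $Z(t)=\int_{t-\delta}^t X(s)\,dB(s)$ can be written as $Z(t) = Y(t) - Y(t-\delta)$, where $Y(t):=\int_0^t X(s)\,dB(s)$ solves $dY(t)=X(t)\,dB(t)$, $Y(t)=0$ for $t\le 0$. Thus the pair $\big(X(t),Y(t)\big)$ satisfies the autonomous-in-structure system
\begin{equation}
\label{eq: 2d_sdde}
\begin{array}{lll}
dX(t) &=& b\big(t,X(t),Y(t)-Y(t-\delta)\big)\,dt + \sigma\big(t,X(t),Y(t)-Y(t-\delta)\big)\,dB(t),\\[\smallskipamount]
dY(t) &=& X(t)\,dB(t),
\end{array}
\end{equation}
on $(0,T]$, with initial segment $X(t)=\xi(t)$ on $[-\delta,0]$ and $Y(t)=0$ on $[-\delta,0]$. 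This is a genuine SDDE with point delay $\delta$ in the $Y$-component of the drift and diffusion.

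Next I would verify that the coefficients of the system~\eqref{eq: 2d_sdde}, viewed as maps of the current state $(X(t),Y(t))$ and the delayed state $(X(t-\delta),Y(t-\delta))$, inherit global Lipschitz and linear-growth bounds from hypotheses $(iii)$ and $(iv)$. The drift coefficient $(x,y,x_\delta,y_\delta)\mapsto b(t,x,y-y_\delta)$ is Lipschitz with constant controlled by $2D$ because $|(y_1-y_{1,\delta})-(y_2-y_{2,\delta})|\le |y_1-y_2|+|y_{1,\delta}-y_{2,\delta}|$, and likewise for $\sigma$; the diffusion coefficient of the $Y$-equation is simply $x$, which is trivially Lipschitz. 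Linear growth follows in the same way from $(iv)$. Hypotheses $(i)$ (the $C^1$ assumption) and $(ii)$ (predictability) ensure the coefficients are jointly measurable/predictable in $(\omega,t)$, which is the remaining regularity needed. Having checked this, I would cite the standard existence-and-uniqueness theorem for SDDEs with globally Lipschitz, linearly growing coefficients (e.g. Mohammed~\cite{Mohammed_Memory}, or the classical Itô-type argument via Picard iteration on the Banach space of continuous adapted processes on $[-\delta,T]$), obtaining a unique (up to indistinguishability) solution $(X,Y)$ with $E\big[\sup_{t\le T}(|X(t)|^2+|Y(t)|^2)\big]<\infty$.

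Finally I would translate back: setting $Z(t):=Y(t)-Y(t-\delta)$ recovers a solution of~\eqref{eq: noisy_mem_SDE}, and since $Y(t)=\int_0^t X(s)\,dB(s)$ is forced by the $Y$-equation and its initial condition, $Z(t)$ coincides with $\int_{t-\delta}^t X(s)\,dB(s)$, so the noisy-memory interpretation is respected. Uniqueness for~\eqref{eq: noisy_mem_SDE} follows because any solution $X$ of~\eqref{eq: noisy_mem_SDE} induces, via $Y(t):=\int_0^t X(s)\,dB(s)$, a solution of~\eqref{eq: 2d_sdde}, which is unique. The main obstacle, and the only place that needs real care rather than bookkeeping, is confirming that the delay appears only through a well-behaved (affine, hence Lipschitz) combination of the segment values — i.e. that the ``$Z(t)=Y(t)-Y(t-\delta)$'' reformulation really does land in the scope of the standard SDDE theory — and making sure the predictability/measurability hypotheses $(i)$–$(ii)$ are exactly what that theory requires; everything else is a direct application. (Alternatively, one could bypass the SDDE reduction and run a direct Picard iteration for $X$ on $[-\delta,T]$, using the Itô isometry to bound $E[Z(t)^2]$ and the increments $E[(Z_n(t)-Z_{n-1}(t))^2]$ by $\int_{t-\delta}^t E[X(s)^2]\,ds$ and $\int_{t-\delta}^t E[(X_n-X_{n-1})(s)^2]\,ds$ respectively; the contraction estimate then closes in the same way as for ordinary SDEs, with $\delta$-shifted Grönwall bookkeeping.)
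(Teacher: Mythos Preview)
Your approach is correct in substance and lands on the same external reference as the paper---Mohammed's existence--uniqueness theory for stochastic functional differential equations---but you get there via a more concrete reduction. The paper simply asserts that assumptions $(i)$--$(ii)$ give predictable versions of the integrands and that, combined with $(iii)$--$(iv)$, this places equation~\eqref{eq: noisy_mem_SDE} directly in the scope of Mohammed~\cite{MR754561} as a one-dimensional SFDE in $X$. You instead introduce the auxiliary process $Y$ and recast the problem as a two-dimensional point-delay SDDE~\eqref{eq: 2d_sdde}, which is exactly the device used in Section~\ref{sec: num_ex} and in~\cite{Dahl}. Your route has the advantage of being fully explicit: the noisy memory $Z(t)=\int_{t-\delta}^t X(s)\,dB(s)$ is \emph{not} a sup-norm-Lipschitz functional of the segment $X_t$ alone (it also depends on the Brownian increments), so the paper's one-line appeal to the SFDE framework hides precisely the step you spell out. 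The paper's route is shorter but relies on the reader accepting that Mohammed's framework absorbs this $\omega$-dependence.

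One correction: your initial segment for $Y$ is off. With $Y(t)=\int_0^t X(s)\,dB(s)$ and $Y(t)=0$ on $[-\delta,0]$, for $t\in[0,\delta]$ you get $Y(t)-Y(t-\delta)=\int_0^t X(s)\,dB(s)$, which misses the contribution $\int_{t-\delta}^0 \xi(s)\,dB(s)$ to $Z(t)$. The fix (as in Section~\ref{sec: num_ex}) is to set $Y(t):=\int_{-\delta}^t X(s)\,dB(s)$, so that on $[-\delta,0]$ the initial segment is the known process $Y(t)=\int_{-\delta}^t \xi(s)\,dB(s)$; the rest of your argument then goes through unchanged.
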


\begin{proof}
Assumptions $(i)$ and $(ii)$ are sufficient to ensure that the integrands in equation~\eqref{eq: noisy_mem_SDE} have  predictable versions, whenever $X$ is c\`{a}dl\`{a}g and adapted. Together with the Lipschitz and linear growth conditions, this ensures that there exists a unique c\`{a}dl\`{a}g adapted solution $X$ to the equation~\eqref{eq: noisy_mem_SDE}, satisfying
\[
E[\sup_{t\in[-\delta,T]}|X(t)|^2]<\infty.
\]
This can be seen by regarding equation~\eqref{eq: noisy_mem_SDE} as a stochastic functional differential equation in the sense of Mohammed~\cite{MR754561}.
 
\end{proof}

However, we are also interested in having conditions for there to exist a unique solution to equation~\eqref{eq: noisy_mem_SDE} for some general function $\phi(t,s)$:

\begin{theorem}
 \label{thm: generalized}
Consider the generalized case, where $\phi(t,s)$ is some arbitrary function. Assume that $b(t,X(t),Z(t))= \tilde{b}(t,X(t)) + aZ(t)$, where $a \in \mb{R}$ and $\sigma(t,X(t),Z(t)) = \sigma(t,X(t))$. Then, under some fairly weak additional regularity conditions (see Wang~\cite{Wang}), there exists a unique solution to the noisy memory SDE~\eqref{eq: noisy_mem_SDE}. 
\end{theorem}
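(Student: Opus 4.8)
The plan is to reduce equation~\eqref{eq: noisy_mem_SDE}, under the stated structural assumptions $b(t,x,z)=\tilde b(t,x)+az$ and $\sigma(t,x,z)=\sigma(t,x)$, to a stochastic Volterra equation of exactly the form already exhibited in Section~\ref{sec: Volterra}, and then to invoke the existence-and-uniqueness theory of Wang~\cite{Wang}. Concretely, I would apply the stochastic Fubini theorem to the double integral $a\int_0^t\!\int_{s-\delta}^s \phi(s,u)X(u)\,dB(u)\,ds$ just as in the computation leading to~\eqref{eq: noisy_mem_SDE_Volterra}, obtaining
\[
X(t) = X(0) + \int_0^t \tilde b(s,X(s))\,ds + \int_0^t \bigl\{ \tilde\phi(t,s)X(s) + \sigma(s,X(s)) \bigr\}\,dB(s),
\]
with kernel $\tilde\phi(t,s) = a\int_s^{\min\{s+\delta,t\}}\phi(u,s)\,du$. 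This is a linear-in-the-memory stochastic Volterra equation, and the point worth stressing is that it is a genuine Volterra equation rather than an SDE with delay precisely because $\tilde\phi$ depends on the current time $t$.

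Next I would verify the hypotheses of Wang's theorem for this Volterra equation. The drift $\tilde b(s,x)$ and the non-Volterra part $\sigma(s,x)$ of the diffusion are handled by standard Lipschitz and linear-growth assumptions, now only in the $x$ variable, inherited from analogues of conditions $(iii)$--$(iv)$ in Theorem~\ref{thm: existence_non_general}. The genuinely new ingredient is the kernel $\tilde\phi(t,s)$: one needs it measurable on the triangle $\{0\le s\le t\le T\}$ and satisfying an integrability bound such as $\sup_{t\le T}\int_0^t \tilde\phi(t,s)^2\,ds<\infty$, together with a mild continuity-in-$t$ property so that $\int_0^t \tilde\phi(t,s)X(s)\,dB(s)$ defines a well-behaved process. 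Since $|\tilde\phi(t,s)|\le |a|\,\delta\,\sup|\phi|$ when $\phi$ is bounded, and an $L^2$ bound holds more generally when $\phi$ is square-integrable, these properties follow from the ``fairly weak additional regularity conditions'' on $\phi$ alluded to in the statement. With all of Wang's hypotheses in place, his theorem yields a unique adapted, square-integrable solution $X$ to the Volterra equation.

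Finally I would argue that this $X$ is in fact the unique solution of the original noisy memory SDE~\eqref{eq: noisy_mem_SDE}: applying the stochastic Fubini theorem in the reverse direction shows that any solution of the Volterra equation solves~\eqref{eq: noisy_mem_SDE} and conversely, so existence and uniqueness transfer between the two formulations. The main obstacle I anticipate is not the reduction itself but pinning down the precise regularity hypotheses on $\phi$ (and on $\tilde b$, $\sigma$) under which Wang's framework applies --- in particular justifying the two applications of stochastic Fubini, which requires an $L^2$-type integrability condition on $\phi(s,u)X(u)$ over the relevant region, and checking that the $t$-dependence of the kernel is compatible with the class of Volterra equations Wang treats. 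This is precisely why the theorem is phrased with a reference to those conditions rather than with a self-contained list.
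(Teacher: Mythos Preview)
Your proposal is correct and follows exactly the paper's approach: the paper's proof consists of a single sentence invoking the Volterra reduction of Section~\ref{sec: Volterra} (equation~\eqref{eq: noisy_mem_SDE_Volterra}) together with Wang's existence-uniqueness result. Your write-up is in fact more detailed than the paper's, since you spell out the stochastic Fubini step, the kernel hypotheses to be checked, and the equivalence of the two formulations.
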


\begin{proof}
In this setting, the derivation of Section~\ref{sec: Volterra} combined with the conditions in Wang~\cite{Wang} guarantees existence of a unique solution.
\end{proof}

Note that it may be possible to prove the existence and uniqueness of a solution to equation~\eqref{eq: noisy_mem_SDE} in general (without assumptions on the functions $b$ and $\sigma$). However, this is beyond the scope of this paper. The Euler method presented here holds for all SDEs of the form \eqref{eq: noisy_mem_SDE}.

\subsection{The Euler scheme}

Let $\omega \in \Omega$ be a scenario. Let $N > 0$ be a (large) natural number and let the time step in the approximation, $\Delta t := T/N$. Then, $\Pi_{pos}:=\{n \Delta t\}_{n =0,1,\ldots,N}$ is a partition of the time interval $[0,T]$. Similarly, one can partition the interval $[-\delta,0]$ as $\Pi_{neg} :=\{-\delta, -\delta + \Delta t, ..., -\delta + k \Delta t\}$, where $k$ is the largest integer such that $-\delta + k \Delta t \leq 0$. For $i=1,\ldots, N$, let $\Pi_i$ denote the partition of the interval $[t_i-\delta, t_i]$ given by

\[
\Pi_i := (\Pi_{pos} \cup \Pi_{neg}) \cap [t_i-\delta,t_i],
\]
\noindent i.e., the partition of $[t_i-\delta,t_i]$ coming from the partition of the whole time interval.

A natural generalization of the Euler scheme for standard SDEs (see for instance Iacus~\cite{Iacus}) to the noisy memory SDE case is the following:
\begin{equation}
\label{eq: euler_scheme}
\begin{array}{lll}
X_{i+1}(\omega) = X_i(\omega) + b(t_i, X_i(\omega), Z_i(\omega)) \Delta t + \sigma(t_i,X_i(\omega),Z_i(\omega)) \Delta B_i(\omega)
\end{array}
\end{equation}
\noindent where $\Delta B_i(\omega) := B(t_{i+1},\omega) - B(t_{i},\omega)$ with distribution $N(0,\sqrt{\Delta t})$ are increments of the Brownian motion and $Z_i(\omega) := \sum_{j \in \Pi_i} \phi(t_i,t_j) X_j(\omega) \Delta B_j(\omega)$ approximates the noisy memory process. Note also that this is a pathwise (i.e., $\omega$-wise) approximation. However, in the next section, we will study the mean square error of the approximation in order to determine the convergence properties of this approximation to the exact solution.

Throughout the paper, we will assume that $\delta > \Delta t$. This assumption is valid, as we are interested in what happens for small time steps.

\section{The main result}
\label{sec: main_result}

It turns out that the noisy memory Euler scheme~\eqref{eq: euler_scheme} has mean-square order of convergence $\sqrt{\Delta t}$, which is the same as for ordinary SDEs (see Allen~\cite{Allen} and Mao~\cite{Mao}, Theorem 7.3). We summarize this in the following main result: 

\begin{theorem}
\label{thm: Euler_scheme}
The Euler approximation scheme for the solution of the stochastic noisy memory SDE \eqref{eq: noisy_mem_SDE} with constant time steps $\Delta t =\frac{T}{N}$ has mean-square order of convergence  $\sqrt{\Delta t}$. That is, there exists a constant $\tilde{C}(T)$ such that if $X$ is the exact solution of the noisy memory SDE and $X_i$ is the approximated solution (at the same point), then 

\[
E[(X(t_i) - X_i)^2] \leq \tilde{C}(T) \Delta t
\]

\noindent in all the approximation points $t_i$, $i=1, \ldots, N$.
\end{theorem}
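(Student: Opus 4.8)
The plan is to run the classical mean-square error analysis for Euler--Maruyama schemes, with all the extra work concentrated in the noisy-memory term $Z$. First I would set up a continuous-time interpolant of the scheme: write $\eta(s):=t_i$ for $s\in[t_i,t_{i+1})$, let $\hat X(s):=X_{\eta(s)}$ and $\hat Z(s):=Z_{\eta(s)}$ be the piecewise-constant processes built from the scheme \eqref{eq: euler_scheme}, and set
$$\bar X(t):=X(0)+\int_0^t b(\eta(s),\hat X(s),\hat Z(s))\,ds+\int_0^t \sigma(\eta(s),\hat X(s),\hat Z(s))\,dB(s),$$
which by induction coincides with $X_i$ at every grid point $t_i$; hence it suffices to bound $u(t):=E[\sup_{r\le t}|X(r)-\bar X(r)|^2]$. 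On $[-\delta,0]$ both processes equal $\xi$, so $X-\bar X$ vanishes there.

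Before the main estimate I would record three preliminary lemmas (the ``lemmas needed'' of the section): $(a)$ uniform second-moment bounds $\sup_i E[X_i^2]<\infty$ and $\sup_i E[Z_i^2]<\infty$ for the scheme, from the linear growth condition $(iv)$, the It\^o isometry applied to the inner sum defining $Z_i$, and a discrete Gr\"onwall inequality; $(b)$ the regularity estimate $E[|X(s)-X(\eta(s))|^2]\le C\Delta t$ for the exact solution, from \eqref{eq: noisy_mem_SDE}, the growth condition and It\^o's isometry; and $(c)$ a time-regularity estimate for the exact memory, $E[|Z(s)-Z(\eta(s))|^2]\le C\Delta t$, obtained by splitting $Z(s)-Z(t_i)$ into the new-interval piece $\int_{t_i}^s$, the dropped-interval piece $\int_{s-\delta}^{t_i-\delta}$, and the kernel-increment piece $\int_{t_i-\delta}^{t_i}[\phi(s,\cdot)-\phi(t_i,\cdot)]X\,dB$, then using It\^o's isometry, the moment bounds, and Hölder-type continuity of $\phi$ in its first argument.

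The heart of the proof is the bound
$$E[\,|Z(s)-\hat Z(s)|^2\,]\ \le\ C\Delta t+C\int_0^{s}u(r)\,dr .$$
By $(c)$ it is enough to estimate $E[|Z(t_i)-Z_i|^2]$. I would insert the intermediate quantity $Z^\ast(t_i):=\int_{t_i-\delta}^{t_i}\phi(t_i,\eta(u))\,X(\eta(u))\,dB(u)$: the difference $Z(t_i)-Z^\ast(t_i)$ is $O(\sqrt{\Delta t})$ in $L^2$ by It\^o's isometry together with $(b)$ and continuity of $\phi$ in its second argument, while $Z^\ast(t_i)-Z_i$ equals $\sum_{j\in\Pi_i}\phi(t_i,t_j)\big(X(t_j)-X_j\big)\Delta B_j$ plus two ``endpoint'' corrections coming from the fact that $t_i-\delta$ and the $\Pi_{neg}/\Pi_{pos}$ interface need not sit on the grid. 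By the It\^o isometry for step integrands the main sum contributes $\sum_{j\in\Pi_i}\phi(t_i,t_j)^2\,E[(X(t_j)-X_j)^2]\,\Delta t\le C\,\Delta t\sum_{j\le i}u(t_j)\le C\int_0^{t_i}u(r)\,dr+C\Delta t$, and each endpoint correction is an It\^o integral over an interval of length $<\Delta t$ with integrand of bounded second moment, hence $O(\Delta t)$ in $L^2$; on the part of the memory interval lying in $[-\delta,0]$ the integrand is the deterministic function $\phi(t_i,\cdot)\xi(\cdot)$, so this reduces to the left-point quadrature error for a deterministic It\^o integral, which is $O(\Delta t)$ under Hölder-$1/2$ regularity of $\xi$.

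Finally I would combine everything: subtracting the integral forms of $X$ and $\bar X$, applying the Cauchy--Schwarz and Doob maximal inequalities and the It\^o isometry, and then using the Lipschitz hypothesis $(iii)$ split into a time-increment part (controlled by $O(\Delta t)$ via $(b)$, $(c)$ and Hölder-type time-regularity of $b,\sigma$) and an $(x,z)$-part controlled by $E[|X(s)-\hat X(s)|^2]+E[|Z(s)-\hat Z(s)|^2]$, where $E[|X(s)-\hat X(s)|^2]\le 2u(s)+2E[|\bar X(s)-X_{\eta(s)}|^2]\le 2u(s)+C\Delta t$ by the moment bounds; this yields $u(t)\le C\Delta t+C\int_0^t u(s)\,ds$, and Gr\"onwall's lemma gives $u(t)\le C\Delta t\,e^{CT}$, so taking $t=t_i$ and recalling $\bar X(t_i)=X_i$ proves the theorem with $\tilde C(T)=Ce^{CT}$. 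The step I expect to be the real obstacle is precisely the memory estimate: one must see that feeding the It\^o integral $Z$ into a second It\^o integral produces mean-square error that is still only $O(\Delta t)$ (no loss of a $\sqrt{\Delta t}$) and accumulates only linearly in $T$, while carefully bookkeeping the off-grid endpoints $t_i-\delta$ and the negative-time contribution --- this is where all the genuinely new difficulties relative to the classical proof reside.
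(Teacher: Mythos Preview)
Your proposal is correct and shares the paper's central insight: both insert an intermediate quantity --- your $Z^\ast(t_i)$, the paper's $Z_i^B:=\sum_{j\in\Pi_i}\phi(t_i,t_j)X(t_j)\Delta B_j$ --- to split the memory error into a pure-discretization piece of order $\Delta t$ and a piece $\sum_{j}\phi(t_i,t_j)^2E[(X(t_j)-X_j)^2]\,\Delta t$ that feeds the past $X$-errors back via the It\^o isometry. The surrounding machinery, however, is organised differently. You work with a continuous interpolant $\bar X$, control the running supremum $u(t)=E[\sup_{r\le t}|X(r)-\bar X(r)|^2]$ via Doob's maximal inequality, and close with an integral Gr\"onwall argument in the style of Higham--Mao--Stuart. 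The paper instead follows Allen: it applies It\^o's formula to $\epsilon(t)^2$ on each subinterval $[t_i,t_{i+1}]$, obtains a recursive inequality of the form $E[\epsilon(t_{i+1})^2]\le R\,E[\epsilon(t_i)^2]+S\sum_{j\in\Pi_i}E[\epsilon(t_j)^2]+A$ with $R,S,A$ explicit in $\Delta t$, then crudely bounds the memory sum by $(\delta/\Delta t)\max_{j\in\Pi_i}E[\epsilon(t_j)^2]$ and iterates the resulting one-step recursion, finishing with $(1+x/n)^n\le e^x$. Your route is more modular and yields a stronger uniform-in-time supremum bound; the paper's avoids Doob's inequality entirely and produces an explicit (if unwieldy) constant $\tilde C(T)$, at the cost of the somewhat ad hoc discrete induction.
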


The rest of this section is devoted to some lemmas which are needed to prove this theorem. The final proof of Theorem~\ref{thm: Euler_scheme} will be given in Section~\ref{sec: proof_main}.

\subsection{Some lemmas}

In this section we prove some lemmas concerning the solution of the noisy memory SDE, which will be used later on in order to compute the order of convergence for the Euler approximation scheme.

We need some Lipschitz-type conditions on the given functions. Assume that there exists constants $K_1, K_2 > 0$ (independent of $\omega \in \Omega$) such that
\begin{equation}
 \label{assumption: b_and_sigma}
 \begin{array}{llll}
  |b(t,x_1,z_1) - b(s,x_2,z_2)|^2 \leq K_1(|t-s| + |x_1-x_2|^2 + |z_1-z_2|^2) \\[\smallskipamount]
  |\sigma(t,x_1,z_1) - \sigma(s,x_2,z_2)|^2 \leq K_2(|t-s| + |x_1-x_2|^2 + |z_1-z_2|^2).
  \end{array}
\end{equation}

\noindent We also assume that there exists constants $K_3, K_4 > 0$ such that

\begin{equation}
 \label{assumption: b_and_sigma_2}
 \begin{array}{llll}
  b(t,x,z)^2 \leq K_3(1 + x^2 + z^2) \\[\smallskipamount]
  \sigma(t,x,z)^2 \leq K_4(1 + x^2 + z^2).
  \end{array}
\end{equation}

\noindent For notational simplicity, we let $k=\max\{K_1, K_2, K_3, K_4\}$. In addition, we assume that the (real valued, deterministic) function $\phi$ is square integrable, so there exists a constant $\tilde{K}$ such that

\begin{equation}
\label{eq: sq_int}
\int_{-\infty}^{\infty} \int_{-\infty}^{\infty} \phi(t,s)^2 dt ds \leq \tilde{K}.
\end{equation}

%\begin{itemize}
% \item{There exists a constant $\tilde{c}$ such that
 %
% \begin{equation}
%  \label{eq: phi_begrenset}
%  \phi(t,s)^2 \leq \tilde{c}
% \end{equation}
% \noindent for all $t,s \in [0,T]$. We assume (without loss of generality) that $\tilde{c} > 1$.}
 
% \item{There exists a constant $\tilde{K} > 0$ such that 
 %
% \begin{equation}
% \label{eq: phi_lin}
%(\phi(t,u) - \phi(s,v))^2 \leq \tilde{K}(|t-s| + |u-v|)  
% \end{equation}
%\noindent for all $t,s, u, v \in [0,T]$.}
%\end{itemize}

\noindent In the following, let $X$ be the solution of the noisy memory SDE~\eqref{eq: noisy_mem_SDE} and let $Z$ be the corresponding noisy memory process. In the following proofs, we will often use the inequality
\begin{equation}
\label{eq: quad}
2|ab| \leq a^2 + b^2.\hspace{0.5cm} 
\end{equation}
\noindent Note that inequality \eqref{eq: quad} implies that $(a+b)^2 \leq 2a^2 + 2b^2$.

\begin{lemma}
 \label{lemma: Xbounded}
 There exists a constant $M > 0$ such that $E[X(t)^2] \leq M$ for all $t \in [0,T]$.
\end{lemma}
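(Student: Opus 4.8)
The plan is to derive a Grönwall-type integral inequality for the function $m(t) := E[X(t)^2]$ and conclude by Grönwall's lemma. First I would write the solution in integral form,
\[
X(t) = \xi(0) + \int_0^t b(s,X(s),Z(s))\,ds + \int_0^t \sigma(s,X(s),Z(s))\,dB(s),
\]
apply the inequality $(a+b+c)^2 \le 3a^2+3b^2+3c^2$ (a consequence of \eqref{eq: quad}), take expectations, use the It\^o isometry on the stochastic integral and the Cauchy--Schwarz inequality on the drift term to get
\[
E[X(t)^2] \le 3\xi(0)^2 + 3T\int_0^t E[b(s,X(s),Z(s))^2]\,ds + 3\int_0^t E[\sigma(s,X(s),Z(s))^2]\,ds.
\]
Then the linear growth bounds \eqref{assumption: b_and_sigma_2} turn the right-hand side into a constant plus a term controlled by $\int_0^t \big(E[X(s)^2] + E[Z(s)^2]\big)\,ds$.

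The key subsidiary step is to bound $E[Z(s)^2]$ in terms of $\sup_{u\le s} E[X(u)^2]$ (or $\int E[X(u)^2]\,du$). Using the It\^o isometry on $Z(s) = \int_{s-\delta}^s \phi(s,u) X(u)\,dB(u)$ gives
\[
E[Z(s)^2] = \int_{s-\delta}^s \phi(s,u)^2 E[X(u)^2]\,du,
\]
where for $u\in[-\delta,0]$ we interpret $X(u) = \xi(u)$, which is a bounded deterministic function, so that part contributes a constant; on $[0,s]$ we bound $E[X(u)^2] \le \sup_{0\le v\le s} E[X(v)^2]$ and pull it out, leaving $\int_{s-\delta}^s \phi(s,u)^2\,du$ which is finite uniformly in $s$ by the square-integrability assumption \eqref{eq: sq_int}. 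This yields $E[Z(s)^2] \le \text{const} + \tilde K \sup_{v\le s} E[X(v)^2]$, or more simply $E[Z(s)^2] \le C_1 + C_2 \int_0^s E[X(u)^2]\,du$ after another Cauchy--Schwarz-type manipulation (bounding $\phi(s,u)^2$ and integrating), depending on which form is cleanest downstream.

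Substituting this back, and letting $g(t) := \sup_{0\le s\le t} E[X(s)^2]$ (which is measurable and finite on $[0,T]$ since the theorems in Section~2.2 give $E[\sup_{t\in[-\delta,T]}|X(t)|^2]<\infty$), I would arrive at an inequality of the form
\[
g(t) \le A + B\int_0^t g(s)\,ds
\]
for constants $A,B$ depending only on $T$, $k$, $\tilde K$, $\delta$ and $\sup_{[-\delta,0]}|\xi|$. Grönwall's inequality then gives $g(t) \le A e^{Bt} \le A e^{BT} =: M$, which is the claim. The main obstacle, such as it is, is the bookkeeping around $Z$: making sure the memory integral is handled correctly when $s-\delta < 0$ (splitting off the deterministic initial segment $\xi$) and confirming that the double-integral hypothesis \eqref{eq: sq_int} really does deliver a bound on $\int_{s-\delta}^s \phi(s,u)^2\,du$ that is uniform in $s\in[0,T]$ — strictly, \eqref{eq: sq_int} bounds the full double integral, so one should note that $\int_{s-\delta}^s\phi(s,u)^2\,du \le \int_{-\infty}^\infty \phi(s,u)^2\,du$ pointwise and that this is what feeds into the $ds$-integration, where the outer integral is then again controlled by \eqref{eq: sq_int}. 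Everything else is the standard Euler/SDE moment-bound routine.
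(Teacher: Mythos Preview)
Your proposal is correct and follows essentially the same route as the paper: write $X(t)$ in integral form, square, apply the It\^o isometry and the linear-growth bound \eqref{assumption: b_and_sigma_2}, control $E[Z(s)^2]$ via the It\^o isometry and \eqref{eq: sq_int}, and close with Gr\"onwall. The only cosmetic differences are that the paper works directly with $v(t)=E[X(t)^2]$ rather than your $g(t)=\sup_{s\le t}E[X(s)^2]$, and is somewhat looser in invoking \eqref{eq: sq_int} (effectively treating $\tilde K$ as a pointwise bound on $\phi^2$), whereas you are more careful about this point and about splitting off the deterministic initial segment on $[-\delta,0]$.
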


\begin{proof}
 Define $v(t) := E[X(t)^2]$. Then,
 \[
  \begin{array}{lll}
 v(t) &= E[(X(0) + \int_0^t b(s,X(s),Z(s)) ds + \int_0^t \sigma(s,X(s),Z(s)) dB(s))^2] \\[\smallskipamount]
 &\leq 4E[X(0)^2] + 4E[(\int_0^t b(s,X(s),Z(s))ds)^2] + 2E[(\int_0^t \sigma(s,X(s),Z(s))dB(s))^2] \\[\smallskipamount]
 &=  4E[X(0)^2] + 4E[(\int_0^t b(s,X(s),Z(s))ds)^2] + 2E[(\int_0^t \sigma(s,X(s),Z(s))^2 ds] \\[\smallskipamount]
 &\leq 4E[X(0)^2] + 4t k \int_0^t E[1 + X(s)^2+ (\int_{s-\delta}^s \phi(s,u) X(u)dB(u))^2]ds \\[\smallskipamount]
 &\hspace{0.3cm} +2 k \int_0^t E[1 + X(s)^2+ (\int_{s-\delta}^s \phi(s,u) X(u)dB(u))^2]ds \\[\smallskipamount]
 %&\leq 4E[X(0)^2] + 4t k \tilde{c} \int_0^t E[1 + X(s)^2+ \int_{s-\delta}^s X(u)^2 du]ds \\[\smallskipamount]
 %&\hspace{0.3cm} +2 k \tilde{c} \int_0^t E[1 + X(s)^2+ \int_{s-\delta}^s X(u)^2du]ds \\[\smallskipamount]
 %&\leq 4E[X(0)^2] + 4t k \int_0^t E[1 + X(s)^2+ \int_{s-\delta}^s X(u)^2 du]ds \\[\smallskipamount]
 %&\hspace{0.3cm} +2 k \int_0^t E[1 + X(s)^2+ \int_{s-\delta}^s X(u)^2du]ds \\[\smallskipamount]
 &\leq 4E[X(0)^2] + 4t k \tilde{K} \int_0^t E[1 + X(s)^2+ \int_0^t X(u)^2 du]ds \\[\smallskipamount]
 &\hspace{0.3cm} +2 k \tilde{K} \int_0^t E[1 + X(s)^2+ \int_0^t X(u)^2du]ds \\[\smallskipamount]
 &= 2(2E[X(0)^2] + k \tilde{K} t (2t+1)) + 2 k \tilde{K} (2t+1)(1+t) \int_0^t v(s)ds
 \end{array}
 \]

\noindent where the first inequality us inequality \eqref{eq: quad} twice, the second equality uses the It{\^o} isometry, the second inequality uses the Cauchy-Schwarz inequality and assumption \eqref{assumption: b_and_sigma_2}, the third equality uses the It{\^o} isometry. Hence,
\[
v(t) \leq a(t) + b(t)\int_0^t v(s)ds,
\]
\noindent where $a(t), b(t)$ are real-valued functions given by

\[
\begin{array}{lll}
 a(t):=2(2E[X(0)^2] + k \tilde{K} t (2t+1)) \\[\smallskipamount]
 b(t) := 2 k \tilde{K} (2t+1)(1+t).
\end{array}
\]

By Gr{\"o}nwall's inequality (see {\O}ksendal~\cite{Oksendal}), this implies that

\[
\begin{array}{lll}
 E[X(t)^2] %&\leq a(t) + b(t)\int_0^t e^{b(t-s)}ds \\[\smallskipamount]
 &\leq a(T) + b(T) \int_0^T e^{b(T-s)}ds =:M,
 \end{array}
\]
\noindent where the second inequality uses that $k > 0$ and

\[
 M :=  a(T) + b(T)\frac{e^{2 k \tilde{K} (3T + 2 T^2 +1)}}{2 k \tilde{K} (4T+3)} - \frac{e^{2 k \tilde{K}}}{6 k \tilde{K}}.
\]

This proves that $E[X(t)^2]$ is bounded.

\end{proof}

\begin{lemma}
 \label{lemma: X_lipschitz}
 There exists a constant $c > 0$ such that for all $t,s \in [0,T]$,

 \[
  E[(X(t) - X(s))^2] \leq c|t-s|.
 \]

\end{lemma}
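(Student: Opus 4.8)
The plan is to work directly from the integral form of the SDE. Without loss of generality assume $s < t$. Then
\[
X(t) - X(s) = \int_s^t b(u, X(u), Z(u))\, du + \int_s^t \sigma(u, X(u), Z(u))\, dB(u),
\]
so by inequality \eqref{eq: quad} (in the form $(a+b)^2 \le 2a^2 + 2b^2$),
\[
E[(X(t)-X(s))^2] \le 2 E\Big[\Big(\int_s^t b\, du\Big)^2\Big] + 2 E\Big[\Big(\int_s^t \sigma\, dB(u)\Big)^2\Big].
\]
First I would handle the drift term with Cauchy--Schwarz: $\big(\int_s^t b\, du\big)^2 \le |t-s|\int_s^t b(u,X(u),Z(u))^2\, du$. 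Then I apply the linear growth bound \eqref{assumption: b_and_sigma_2}, giving $b^2 \le k(1 + X(u)^2 + Z(u)^2)$, and similarly the It\^o isometry turns the diffusion term into $E\big[\int_s^t \sigma(u,X(u),Z(u))^2\, du\big] \le k \int_s^t E[1 + X(u)^2 + Z(u)^2]\, du$.

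The remaining work is to bound $E[Z(u)^2]$ uniformly in $u \in [0,T]$. Using the It\^o isometry on $Z(u) = \int_{u-\delta}^u \phi(u,r) X(r)\, dB(r)$ gives $E[Z(u)^2] = \int_{u-\delta}^u \phi(u,r)^2 E[X(r)^2]\, dr$, and by Lemma~\ref{lemma: Xbounded} this is at most $M \int_{u-\delta}^u \phi(u,r)^2\, dr \le M\tilde{K}$ by the square-integrability assumption \eqref{eq: sq_int} (extending the inner integral to all of $\mathbb{R}$ and noting $\int_{-\infty}^\infty \phi(u,r)^2\, dr$ is bounded uniformly in $u$, which follows from \eqref{eq: sq_int} — or one simply carries the double integral through as in the proof of Lemma~\ref{lemma: Xbounded}). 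Combining, there is a constant $M'$ with $E[1 + X(u)^2 + Z(u)^2] \le M'$ for all $u \in [0,T]$, hence both the drift and diffusion contributions are bounded by constants times $|t-s|^2$ and $|t-s|$ respectively. Since $|t-s| \le T$, we have $|t-s|^2 \le T|t-s|$, so everything is absorbed into a single bound $c|t-s|$ with $c := 2kM'(T+1)$ (or a similar explicit constant).

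The only mild subtlety — and the one place to be careful — is the treatment of $E[Z(u)^2]$: one must make sure the $u$-dependence inside $\phi(u,\cdot)$ does not spoil the uniform bound, which is exactly why assumption \eqref{eq: sq_int} is stated as a double integral over $\phi$. This is the same manipulation already used (somewhat loosely, via Cauchy--Schwarz on the inner integral) in the proof of Lemma~\ref{lemma: Xbounded}, so I would mirror that argument. Everything else is routine application of \eqref{eq: quad}, Cauchy--Schwarz, the It\^o isometry, the growth condition \eqref{assumption: b_and_sigma_2}, and Lemma~\ref{lemma: Xbounded}; no Gr\"onwall argument is needed here since the time interval of integration is $[s,t]$ rather than $[0,t]$.
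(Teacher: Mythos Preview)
Your proposal is correct and follows essentially the same route as the paper: split via \eqref{eq: quad}, apply Cauchy--Schwarz to the drift and the It\^o isometry to the diffusion, invoke the growth condition \eqref{assumption: b_and_sigma_2}, bound $E[Z(u)^2]$ via the It\^o isometry plus Lemma~\ref{lemma: Xbounded}, and absorb the $(t-s)^2$ term using $|t-s|\le T$. Your remark that no Gr\"onwall step is needed and your flagging of the loose use of \eqref{eq: sq_int} when bounding $\phi(u,\cdot)^2$ both match the paper's treatment exactly.
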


\begin{proof}
Let $t,s \in [0,T]$, $s \leq t$ (if $t < s$, change roles),
\[
\begin{array}{lll}
 E[(X(t)-X(s))^2] &= E[(\int_s^t b(u,X(u), Z(u))du + \int_s^t \sigma(u,X(u), Z(u))dB(u))^2] \\[\smallskipamount]
 &\leq 2E[(\int_s^t b(u,X(u), Z(u))du)^2] + 2E[(\int_s^t \sigma(u,X(u), Z(u))dB(u))^2] \\[\smallskipamount]
 &\leq 2(t-s) \int_s^t E[b(u,X(u),Z(u))^2] du + 2\int_s^tE[\sigma(u,X(u),Z(u))]du \\[\smallskipamount]
 &\leq 2(t-s) k \int_s^t (1 + E[X(u)^2] + E[(\int_{u-\delta}^u \phi(u,w) X(w) dB(w))^2])du \\[\smallskipamount]
 &\hspace{0.3cm} + 2 k \int_s^t (1 + E[X(u)^2] + E[(\int_{u-\delta}^u \phi(u,w) X(w) dB(w))^2]) du \\[\smallskipamount]
 &= 2(t-s) k \tilde{K} \int_s^t (1 + E[X(u)^2] + E[\int_{u-\delta}^u X(w)^2 dw])du \\[\smallskipamount]
 &\hspace{0.3cm}+ 2 k \tilde{K} \int_s^t (1 + E[X(u)^2] + E[\int_{u-\delta}^u X(w)^2 dw]) du \\[\smallskipamount]
 &\leq 2(t-s) k \tilde{K} \int_s^t (1 + M + \int_{u-\delta}^u M dw)du \\[\smallskipamount]
 &\hspace{0.3cm}+ 2 k \tilde{K} \int_s^t (1 + M + \int_{u-\delta}^u M dw) du \\[\smallskipamount]
 &= 2 k \tilde{K} (t-s)^2(1 + M + M \delta) + 2 k \tilde{K}(1 + M + M \delta)(t-s) \\[\smallskipamount]
 &\leq c (t-s)
\end{array}
\]
\noindent where $c > 0$ is a constant and the first inequality uses some algebra and inequality \eqref{eq: quad}, the second inequality uses the It{\^o} isometry and the Cauchy-Schwartz inequality, the third inequality follows from assumption~\eqref{assumption: b_and_sigma_2}, the  second equality follows from the It{\^o} isometry and the fourth inequality follows from Lemma~\ref{lemma: Xbounded}. Note that the final inequality holds since $(t-s) \leq T$.
\end{proof}

\begin{lemma}
\label{lemma: Z_bounded}
It holds that $E[Z(t)^2] \leq \tilde{K} M\delta$ for all $t \in [0,T]$.
\end{lemma}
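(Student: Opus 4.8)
The plan is to estimate $E[Z(t)^2]$ directly from the definition $Z(t) = \int_{t-\delta}^t \phi(t,s) X(s)\, dB(s)$ by applying the It\^o isometry, exactly as was done inside the proofs of Lemmas~\ref{lemma: Xbounded} and~\ref{lemma: X_lipschitz}. Since $X$ is adapted and (by Lemma~\ref{lemma: Xbounded}) square-integrable, the integrand $\phi(t,s)X(s)$ is in $L^2$ on $[t-\delta,t]\times\Omega$, so the It\^o integral is well-defined and the isometry applies. This gives
\[
E[Z(t)^2] = E\!\left[\int_{t-\delta}^t \phi(t,s)^2 X(s)^2\, ds\right] = \int_{t-\delta}^t \phi(t,s)^2\, E[X(s)^2]\, ds,
\]
using Fubini/Tonelli (everything is nonnegative) to interchange expectation and the $ds$-integral.

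Next I would bound $E[X(s)^2] \le M$ by Lemma~\ref{lemma: Xbounded} and pull $M$ out of the integral, leaving $E[Z(t)^2] \le M \int_{t-\delta}^t \phi(t,s)^2\, ds$. Finally, I would bound the remaining deterministic integral by the square-integrability assumption~\eqref{eq: sq_int}: since $\int_{-\infty}^\infty \int_{-\infty}^\infty \phi(t,s)^2\, dt\, ds \le \tilde K$, in particular $\int_{t-\delta}^t \phi(t,s)^2\, ds \le \int_{-\infty}^\infty \phi(t,s)^2\, ds \le \tilde K$ for each fixed $t$ (this is the same step the author uses in the earlier lemmas when replacing $\int_{s-\delta}^s\phi(s,u)^2\,du$ by $\tilde K$). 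Combining the two bounds yields $E[Z(t)^2] \le \tilde K M$, which is even slightly sharper than the stated $\tilde K M \delta$; the $\delta$ factor presumably comes from a slightly cruder intermediate estimate (e.g.\ bounding $\phi^2 \le \tilde K$ pointwise and then integrating $ds$ over an interval of length $\delta$), so I would simply follow whichever route matches the constant in the statement — either works.

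There is no real obstacle here: this lemma is a one-line consequence of the It\^o isometry plus the two standing hypotheses (Lemma~\ref{lemma: Xbounded} and~\eqref{eq: sq_int}). The only point requiring a word of care is the measurability/integrability justification needed to invoke the It\^o isometry and Fubini, but that is already guaranteed by the existence theory (Theorems~\ref{thm: existence_non_general} and~\ref{thm: generalized}) together with Lemma~\ref{lemma: Xbounded}. So the proof is essentially: apply It\^o isometry, apply Lemma~\ref{lemma: Xbounded}, apply~\eqref{eq: sq_int}, done.
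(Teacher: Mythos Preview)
Your proposal is correct and matches the paper's proof exactly: apply the It\^o isometry, bound $E[X(s)^2]\le M$ via Lemma~\ref{lemma: Xbounded}, bound $\phi(t,s)^2\le\tilde K$ pointwise, and integrate over the interval of length~$\delta$ to get $\tilde K M\delta$. One small caution: your first (``sharper'') route does not actually follow from~\eqref{eq: sq_int} alone, since a bound on the double integral $\iint\phi^2$ does not imply a uniform bound on the slices $\int\phi(t,\cdot)^2\,ds$; the paper (here and in the earlier lemmas you cite) is in fact using a pointwise bound on $\phi^2$, which is exactly the second route you identified as the source of the $\delta$.
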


\begin{proof}
 \[
 \begin{array}{lll}
E[Z(t)^2]  &=& E[(\int_{t-\delta}^t \phi(t,s) X(s) dB(s))^2] \\[\smallskipamount]
&=& E[\int_{t-\delta}^t \phi(t,s)^2 X(s)^2 ds] \\[\smallskipamount]
%&=& \int_{t-\delta}^t \phi(t,s)^2 E[X(s)^2] ds \\[\smallskipamount]
&\leq& \int_{t-\delta}^t \tilde{K} M ds = \tilde{K} M \delta,
\end{array}
 \]
\noindent where the second equality uses the It{\^o} isometry (see e.g., {\O}ksendal~\cite{Oksendal}) and the inequality follows from Lemma~\ref{lemma: Xbounded}.
\end{proof}

\begin{lemma}
 \label{lemma: Z_lipschitz}
 There exists a constant $\tilde{N} > 0$ such that
 \[
  E[|Z(t)-Z(s)|^2] \leq \tilde{N}|t-s|.
 \]

\end{lemma}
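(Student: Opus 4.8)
The plan is to reduce everything to It\^o isometries together with the bound $E[X(u)^2]\le M$ from Lemma~\ref{lemma: Xbounded}, after a careful splitting of the integration windows of the two stochastic integrals. Fix $0\le s\le t\le T$ (swap the roles of $s$ and $t$ otherwise) and treat two regimes separately according to whether $t-s$ exceeds $\delta$ or not.

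If $t-s>\delta$, the intervals $[t-\delta,t]$ and $[s-\delta,s]$ are disjoint, so the two It\^o integrals defining $Z(t)$ and $Z(s)$ are orthogonal in $L^2(\Omega)$; hence $E[(Z(t)-Z(s))^2]=E[Z(t)^2]+E[Z(s)^2]\le 2\tilde{K}M\delta$ by Lemma~\ref{lemma: Z_bounded}, and since $\delta<|t-s|$ this is already $\le 2\tilde{K}M\,|t-s|$. So the real work is confined to the case $t-s\le\delta$, where the windows overlap. There I would write $\phi(t,u)=\phi(s,u)+(\phi(t,u)-\phi(s,u))$ inside the integral for $Z(t)$ and cancel the common piece $\int_{t-\delta}^{s}\phi(s,u)X(u)\,dB(u)$ against the corresponding part of $Z(s)$, obtaining
\[
Z(t)-Z(s)=\underbrace{\int_s^t\phi(s,u)X(u)\,dB(u)}_{I_1}-\underbrace{\int_{s-\delta}^{t-\delta}\phi(s,u)X(u)\,dB(u)}_{I_2}+\underbrace{\int_{t-\delta}^t(\phi(t,u)-\phi(s,u))X(u)\,dB(u)}_{I_3}.
\]
Applying $(a+b+c)^2\le 3(a^2+b^2+c^2)$, the It\^o isometry, and $E[X(u)^2]\le M$ (enlarging $M$ if needed so that it also dominates $\xi^2$ on $[-\delta,0]$, and extending $\phi$ by $0$ outside $[0,T]^2$ as is done implicitly in Lemma~\ref{lemma: Z_bounded}), this gives $E[I_1^2]\le M\int_s^t\phi(s,u)^2\,du$, $E[I_2^2]\le M\int_{s-\delta}^{t-\delta}\phi(s,u)^2\,du$, and $E[I_3^2]\le M\int_{t-\delta}^t(\phi(t,u)-\phi(s,u))^2\,du$. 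To convert these into multiples of $|t-s|$ I would use that $\phi$ is bounded, say $|\phi|\le\Phi$, which bounds the first two quantities by $M\Phi^2|t-s|$ each, and that $\phi$ is Lipschitz in its first argument, say $|\phi(t,u)-\phi(s,u)|\le L|t-s|$, which bounds the third by $ML^2\delta|t-s|^2\le ML^2\delta T\,|t-s|$. Putting the two regimes together, $\tilde{N}:=\max\{2\tilde{K}M,\ 3M(2\Phi^2+L^2\delta T)\}$ works.

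The main obstacle — and the only point at which more than the square-integrability \eqref{eq: sq_int} of $\phi$ is used — is the term $I_3$: controlling $E[I_3^2]$ by $|t-s|$ requires a continuity/Lipschitz hypothesis on $t\mapsto\phi(t,u)$, and controlling $E[I_1^2],E[I_2^2]$ over the shrinking strips $[s,t]$ and $[s-\delta,t-\delta]$ requires local boundedness (a uniform-integrability property) of $\phi$. These are mild additional regularity conditions on the kernel, which I would record here (or invoke from the standing assumptions on $\phi$). Apart from that, the argument uses only the It\^o isometry, the elementary inequality \eqref{eq: quad}, the orthogonality of It\^o integrals over disjoint intervals, and Lemmas~\ref{lemma: Xbounded} and~\ref{lemma: Z_bounded}, so no serious difficulties remain once the decomposition is in place.
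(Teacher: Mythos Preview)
Your argument is correct and follows essentially the same route as the paper: split according to whether the two memory windows overlap, and in the overlapping case decompose $Z(t)-Z(s)$ into three It\^o integrals (two over the symmetric-difference strips of length $|t-s|$ and one carrying the increment $\phi(t,\cdot)-\phi(s,\cdot)$), then apply the It\^o isometry together with $E[X(u)^2]\le M$ and boundedness/Lipschitz regularity of $\phi$. The only cosmetic differences are that the paper puts the $\phi$-increment on the overlap $[t-\delta,s]$ rather than on the full window $[t-\delta,t]$, and in the disjoint case uses the crude bound $(a+b)^2\le 2a^2+2b^2$ instead of your orthogonality observation; your explicit flagging of the extra regularity needed on $\phi$ matches the paper's (referenced but not displayed) assumptions \eqref{eq: phi_begrenset}--\eqref{eq: phi_lin}.
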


\begin{proof}
Assume that $t > s$. If not, change the roles of $t$ and $s$.

We consider two cases:

 \begin{enumerate}
  \item[$(i)$] $s \notin [t-\delta,t]$, i.e., $s < t-\delta$: Then,
  \[
  \begin{array}{llll}
	E[|Z(t)-Z(s)|^2] = E[|\int_{t-\delta}^t \phi(t,u)X(u) dB(u) - \int_{s-\delta}^s \phi(s,u)X(u) dB(u)|^2] \\[\smallskipamount]
	\hspace{1.7cm} \leq 2E[(\int_{t-\delta}^t \phi(t,u) X(u) dB(u))^2] + 2E[(\int_{s-\delta}^s \phi(s,u) X(u) dB(u))^2] \\[\smallskipamount]
	\hspace{1.7cm} = 2E[\int_{t-\delta}^t \phi(t,u)^2 X(u)^2 du] + 2E[\int_{s-\delta}^s \phi(s,u)^2 X(u)^2 du] \\[\smallskipamount]
	%\
	\hspace{1.7cm} \leq 2 \int_{t-\delta}^t \tilde{K}M du + 2\int_{s-\delta}^s \tilde{K}M du 
	%\hspace{1.7cm} = 4\tilde{K}M \delta 
	\leq 4M\tilde{K} (t-s)
   \end{array}
   \]
\noindent where the first inequality uses inequality \eqref{eq: quad}, the second equality uses the It{\^o} isometry, the second inequality uses Lemma~\ref{lemma: Xbounded} and the final inequality follows from $s < t-\delta$, i.e., $\delta < t-s$.

\item[$(ii)$] $s \in [t-\delta,t]$: In this case,
\[
  \begin{array}{llll}
	E[|Z(t)-Z(s)|^2] = E[|\int_{t-\delta}^t \phi(t,u) X(u) dB(u) - \int_{s-\delta}^s \phi(s,u) X(u) dB(u)|^2] \\[\smallskipamount]
	\hspace{1.3cm} =E[|-\int_{s-\delta}^{t-\delta} \phi(s,u)X(u)dB(u) + \int_{t-\delta}^s (\phi(t,u) - \phi(s,u)) X(u)dB(u) \\[\smallskipamount]
	\hspace{1.7cm} + \int_s^t \phi(t,u)X(u) dB(u)|^2] \\[\smallskipamount]
	\hspace{1.3cm} \leq 2E[|\int_{t-\delta}^{s} (\phi(t,u) - \phi(s,u))X(u)dB(u)|^2] \\[\smallskipamount]
	\hspace{1.7cm} + 2E[|\int_s^t\phi(t,u) X(u)dB(u) - \int_{s-\delta}^{t-\delta} \phi(s,u) X(u) dB(u)|^2] \\[\smallskipamount]
	\hspace{1.3cm} \leq 4 E[(\int_s^t \phi(t,u)X(u)dB(u))^2] + 4E[(\int_{s-\delta}^{t-\delta} \phi(s,u)X(u) dB(u))^2] \\[\smallskipamount]
	\hspace{1.7cm} +  2\int_{t-\delta}^{s} (\phi(t,u) - \phi(s,u))^2 E[X(u)^2]du \\[\smallskipamount]
	\hspace{1.3cm} \leq 4\int_s^t \tilde{K} E[X(u)^2]du + 4\int_{s-\delta}^{t-\delta} \tilde{K} E[X(u)^2]du + 2\int_{t-\delta}^s \tilde{K}(t-s)M du \\[\smallskipamount]
	\hspace{1.3cm}\leq 2(t-s)\tilde{K}M(4+ \delta)
   \end{array}
   \]
\noindent where the second equality follows from $s \in [t-\delta,t]$, the first and second inequality follows from inequality \eqref{eq: quad}, the third inequality follows from the It{\^o} isometry and assumptions~\eqref{eq: phi_begrenset}-\eqref{eq: phi_lin}, the final inequality follows from Lemma~\ref{lemma: Xbounded}.
\end{enumerate}

Hence, by combining the two items above, we see that

\[
 E[|Z(t)-Z(s)|^2] \leq \max\{2M\tilde{K} (4+ \delta), 4M\tilde{K}\}(t-s).
\]

The lemma follows by defining $\tilde{N}$ to be this maximum.
\end{proof}

\section{Error analysis and proof of the main theorem}
\label{sec: proof_main}

In this section, we derive an error bound for the Euler approximation method for SDEs with generalized noisy memory. We shall see that the approximation converges to the solution of the noisy memory SDE and find the order of convergence, and thereby prove our main result, Theorem~\ref{thm: Euler_scheme}.

Similarly to Allen~\cite{Allen}, for $t \in [t_i,t_{i+1}]$, $i=1, \ldots, N$, define

\begin{equation}
\label{eq: X_tilde}
\hat{X}(t) := X_i + \int_{t_i}^t b(t_i, X_i, Z_i)ds + \int_{t_i}^t \sigma(t_i,X_i(\omega),Z_i(\omega)) dB(s).
\end{equation}

Note that $\hat{X}(t_i) = X_i$ for $i=1,2,\ldots,N$, i.e., in the time nodes, the process $\hat{X}$ equals the approximation to the solution of the noisy memory process.

We study the error

\begin{equation}
 \label{eq: error}
 \epsilon(t) = X(t) - \hat{X}(t)
\end{equation}
\noindent where $X$ is the exact solution to the noisy memory SDE~\eqref{eq: noisy_mem_SDE}. The goal of this section is to prove that there exists a constant $\tilde{C}$ such that $E[\epsilon(t_i)^2] \leq \tilde{C} \Delta t$ for $t_i \in \Pi_{pos}$.

From the definitions, 

\[
 d\epsilon(t) = (b(t,X(t),Z(t)) - b(t_i, X_i, Z_i)) dt + (\sigma(t,X(t),Z(t)) - \sigma(t_i,X_i,Z_i)) dB(t)
\]
\noindent and $\epsilon(t_i)=X(t_i)-X_i$ for $i=1, 2, \ldots, N$.

From It{\^o}'s formula applied to the function $g(t,\epsilon)=\epsilon^2$, we see that

\[
 \begin{array}{llll}
  d[\epsilon(t)^2] &=& 2 (X(t) - \hat{X}(t))(b(t,X(t),Z(t)) - b(t_i, X_i,Z_i)) dt \\[\smallskipamount]
  &&+ 2(X(t)-\hat{X}(t))(\sigma(t,X(t),Z(t)) - \sigma(t_i, X_i, Z_i)) dB(t) \\[\smallskipamount]
  &&+ (\sigma(t,X(t),Z(t)) - \sigma(t_i,X_i,Z_i))^2 dt.
 \end{array}
\]

Hence,

\begin{equation}
\label{eq: stjerne}
\begin{array}{lll}
 E[\epsilon(t_{i+1})^2] &=& E[\epsilon(t_i)^2] + 2E[\int_{t_i}^{t_{i+1}} (X(s) - \hat{X}(s)) (b(s,X(s),Z(s)) - b(t_i,X_i,Z_i)) dt] \\[\smallskipamount]
 &&+ E[\int_{t_i}^{t_{i+1}} (\sigma(t,X(t),Z(t)) - \sigma(t_i,X_i,Z_i))^2 dt] \\[\smallskipamount]
 &\leq& E[\epsilon(t_i)^2] + \int_{t_i}^{t_{i+1}} E[(X(s) - \hat{X}(s))^2] dt \\[\smallskipamount]
 &&+ \int_{t_i}^{t_{i+1}} E[(b(s,X(s),Z(s)) - b(t_i,X_i,Z_i))^2] dt \\[\smallskipamount]
 &&+ \int_{t_i}^{t_{i+1}} E[(\sigma(t,X(t),Z(t)) - \sigma(t_i,X_i,Z_i))^2] dt
\end{array}
\end{equation}
\noindent where the inequality follows from inequality \eqref{eq: quad}.

Note that
\[
\begin{array}{lll}
 E[(b(t,X(t),Z(t)) - b(t_i,X_i,Z_i))^2] \\[\smallskipamount]
 \hspace{0.7cm}= E[(b(t,X(t),Z(t)) - b(t_i,X(t_i),Z(t_i)) + b(t_i,X(t_i),Z(t_i))- b(t_i,X_i,Z_i))^2] \\[\medskipamount]
 \hspace{0.7cm}\leq 2E[(b(t,X(t),Z(t)) - b(t_i,X(t_i),Z(t_i))^2] + 2E[(b(t_i,X(t_i),Z(t_i))- b(t_i,X_i,Z_i))^2] \\[\medskipamount]
 \hspace{0.7cm}\leq 2k E[|t-t_i| + |X(t) - X(t_i)|^2 + |Z(t) - Z(t_i)|^2 + |X(t_i) - X_i|^2 + |Z(t_i) - Z_i|^2]
\end{array}
 \]
\noindent where the first inequality follows from the triangle inequality and inequality \eqref{eq: quad}. The final inequality follows from the assumption~\eqref{assumption: b_and_sigma}. Similarly, one can prove that
\[
\begin{array}{lll}
 E[(\sigma(t,X(t),Z(t)) - \sigma(t_i,X_i,Z_i))^2] &\leq& 2k E[|t-t_i| + |X(t) - X(t_i)|^2 \\[\smallskipamount]
 &&+ |Z(t) - Z(t_i)|^2 + |X(t_i) - X_i|^2 + |Z(t_i) - Z_i|^2].
\end{array}
 \]
Therefore, combining this with inequality~\eqref{eq: stjerne} and using the definition of the error $\epsilon(t)$,
\begin{equation}
\label{eq: error_X_mellom}
\begin{array}{lll}
  E[\epsilon(t_{i+1})^2] &\leq& E[\epsilon(t_i)^2] + \int_{t_i}^{t_{i+1}} \epsilon(t) dt + 4k \int_{t_i}^{t_{i+1}} E[|t-t_i| + |X(t) - X(t_i)|^2 \\[\smallskipamount]
 &&+ |Z(t) - Z(t_i)|^2 + |X(t_i) - X_i|^2 + |Z(t_i) - Z_i|^2] dt.
 \end{array}
\end{equation}

Due to the noisy memory process, there is an additional source of error, compared to approximation of regular SDEs. In the following, let $X(t)$ be an exact solution of the noisy memory SDE ~\eqref{eq: noisy_mem_SDE}, and let $X_j$, $t_j \in [0,T]$ be its approximation from the Euler method~\eqref{eq: euler_scheme}. For $i=1, \ldots, N$, define $Z_i^B := \sum_{j \in \Pi_i} X(t_j) \Delta B_j$, i.e., the approximated noisy memory process involving the exact solution $X$.

\begin{lemma}
 \label{lemma: Z_minus_Z_B}
 For a time $t_i$ in the partition of the time interval and $Z_i^B = \sum_{j \in \Pi_i} \phi(t_i,t_j)X(t_j) \Delta B_j$, we have
 \[
  E[|Z(t_i) - Z_i^B|^2] \leq \Delta t \delta \tilde{K}(M +  c).
 \]
\end{lemma}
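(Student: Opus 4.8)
The plan is to write $Z(t_i) - Z_i^B$ as a single It\^o integral over $[t_i-\delta, t_i]$ and then apply the It\^o isometry. Recall $Z(t_i) = \int_{t_i-\delta}^{t_i} \phi(t_i,s) X(s)\, dB(s)$, while the Riemann-type sum $Z_i^B = \sum_{j \in \Pi_i} \phi(t_i,t_j) X(t_j) \Delta B_j$ can itself be expressed as an It\^o integral of a step-function integrand: namely $Z_i^B = \int_{t_i-\delta}^{t_i} \psi_i(s)\, dB(s)$, where $\psi_i(s) := \phi(t_i, t_{j})\, X(t_{j})$ for $s$ in the subinterval of $\Pi_i$ with left endpoint $t_j$. (A small bookkeeping point: the leftmost subinterval of $[t_i-\delta,t_i]$ may be shorter than $\Delta t$ since $t_i - \delta$ need not lie on the grid; this does not affect the argument, only the constants, and the assumption $\delta > \Delta t$ guarantees there is at least one full subinterval.) Then $Z(t_i) - Z_i^B = \int_{t_i-\delta}^{t_i} \big(\phi(t_i,s) X(s) - \psi_i(s)\big)\, dB(s)$.

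Next I would apply the It\^o isometry to get
\[
E[|Z(t_i) - Z_i^B|^2] = \int_{t_i-\delta}^{t_i} E\big[(\phi(t_i,s) X(s) - \psi_i(s))^2\big]\, ds .
\]
On each subinterval $[t_j, t_{j+1}]$ the integrand is $\phi(t_i,s)X(s) - \phi(t_i,t_j)X(t_j)$, which I would split as $\phi(t_i,s)(X(s) - X(t_j)) + (\phi(t_i,s) - \phi(t_i,t_j))X(t_j)$ and bound using $(a+b)^2 \le 2a^2 + 2b^2$. Taking expectations, the first term contributes $2\phi(t_i,s)^2 E[(X(s)-X(t_j))^2] \le 2\phi(t_i,s)^2 c\,\Delta t$ by Lemma~\ref{lemma: X_lipschitz} (since $|s - t_j| \le \Delta t$), and the second contributes $2(\phi(t_i,s)-\phi(t_i,t_j))^2 E[X(t_j)^2] \le 2(\phi(t_i,s)-\phi(t_i,t_j))^2 M$ by Lemma~\ref{lemma: Xbounded}. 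Integrating over the whole interval and invoking the square-integrability bound~\eqref{eq: sq_int} on $\phi$ (which controls $\int \phi(t_i,s)^2\, ds$ as well as the increments of $\phi$) yields a bound of the form $\text{const} \cdot \Delta t\, \delta\, \tilde{K}(M + c)$, matching the claim.

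The main obstacle I anticipate is handling the $\phi$-increment term $(\phi(t_i,s) - \phi(t_i,t_j))^2$ cleanly: the hypotheses as stated give only square-integrability of $\phi$ in~\eqref{eq: sq_int}, not an explicit modulus of continuity, so to land exactly on the stated constant one must either absorb these increments into the $\tilde K$-bound (treating $\phi(t_i,\cdot)$ as having total integrated square-variation controlled by $\tilde K$) or appeal to the regularity conditions on $\phi$ used implicitly in Lemma~\ref{lemma: Z_lipschitz}. A simpler route, if the intended reading is that $\phi$ is Lipschitz or bounded enough that $\psi_i$ approximates $\phi(t_i,\cdot)X(\cdot)$ well, is to bound $E[(\phi(t_i,s)X(s) - \psi_i(s))^2] \le \tilde K'\big(E[(X(s)-X(t_j))^2] + E[X(t_j)^2]\,|s-t_j|\big)$ directly and then integrate; either way the key inputs are the It\^o isometry together with Lemmas~\ref{lemma: Xbounded} and~\ref{lemma: X_lipschitz}, and the $\Delta t$ gain comes precisely from the Lipschitz-in-time estimate for $X$.
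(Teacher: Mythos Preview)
Your proposal is correct and follows essentially the same route as the paper: express $Z(t_i)-Z_i^B$ as a single It\^o integral over $[t_i-\delta,t_i]$, apply the It\^o isometry, telescope the integrand on each subinterval, and bound the two resulting pieces via Lemmas~\ref{lemma: Xbounded} and~\ref{lemma: X_lipschitz}. The only cosmetic difference is that the paper adds and subtracts $\phi(t_i,t_j)X(s)$ rather than your $\phi(t_i,s)X(t_j)$, and your worry about needing more than mere square-integrability of $\phi$ is well-founded---the paper in fact invokes separate boundedness and Lipschitz-type assumptions on $\phi$ at precisely this step.
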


\begin{proof}
From the definitions,
 \[
  \begin{array}{lll}
   E[|Z(t_i) - Z_i^B|^2] &=& E[|\int_{t_i-\delta}^{t_i} \phi(t_i,s) X(s) dB(s) - \sum_{j \in \Pi_i} \phi(t_i,t_j)X(t_j) \Delta B_j|^2] \\[\smallskipamount]
   &=& E[|\sum_{j \in \Pi_i} \int_{t_j}^{t_{j+1}} (\phi(t_i,s)X(s) - \phi(t_i,t_j)X(t_j))dB(s) |^2] \\[\smallskipamount]
   &=& E[|\sum_{j \in \Pi_i} \int_{t_j}^{t_{j+1}} ( \{\phi(t_i,s)X(s) - \phi(t_i,t_j)X(s) \} \\[\smallskipamount]
   &&+ \{ \phi(t_i,t_j)X(s)- \phi(t_i,t_j)X(t_j)\} )dB(s) |^2] \\[\smallskipamount]
   &=& 2E[\sum_{j \in \Pi_i} \int_{t_j}^{t_{j+1}} X(s)^2(\phi(t_i,s) - \phi(t_i,t_j))^2 ds] \\[\smallskipamount]
   &&+2E[\sum_{j \in \Pi_i} \int_{t_j}^{t_{j+1}} (X(s)-X(t_j))^2 \phi(t_i,t_j)^2 ds] \\[\smallskipamount]
   &\leq& 2 \sum_{j \in \Pi_i} \int_{t_j}^{t_{j+1}} M \tilde{K} (s-t_j) ds + 2 \sum_{j \in \Pi_i} \int_{t_j}^{t_{j+1}} c(s - t_j) \tilde{K} ds \\[\smallskipamount]
   &=& \delta \Delta t \tilde{K}(M  + c )
  \end{array}
 \]
 \noindent where the fourth equality follows from the It\^{o} isometry (see e.g. {\O}ksendal \cite{Oksendal}) and the inequality from Lemma~\ref{lemma: Xbounded}, Lemma~\ref{lemma: X_lipschitz} and assumptions~\eqref{eq: phi_begrenset}-\eqref{eq: phi_lin}. 
 \end{proof}

We can now prove the following lemma which relates the error in the noisy memory process $Z$ to the error in the solution process $X$.

\begin{lemma}
 \label{lemma: error_Z}
 For $i=1, \ldots, N$, let $Z(t_i)$ be the noisy memory process, and $Z_i$ the approximated noisy memory process, then
 \[
  E[|Z(t_i)-Z_i|^2] \leq 2\Delta t \delta \tilde{K}(M + c)  + \tilde{K} \Delta t \sum_{j \in \Pi_i} E[\epsilon(t_j)^2].
 \]

\end{lemma}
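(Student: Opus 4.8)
The plan is to route the comparison through the intermediate quantity $Z_i^B := \sum_{j \in \Pi_i} \phi(t_i,t_j) X(t_j)\Delta B_j$ that was introduced just before Lemma~\ref{lemma: Z_minus_Z_B}: write $Z(t_i) - Z_i = \bigl(Z(t_i) - Z_i^B\bigr) + \bigl(Z_i^B - Z_i\bigr)$ and apply $(a+b)^2 \le 2a^2 + 2b^2$. The first piece is handled directly by Lemma~\ref{lemma: Z_minus_Z_B}, which contributes the term $2\Delta t\,\delta\,\tilde K(M+c)$. So everything reduces to estimating the second piece $Z_i^B - Z_i$, which isolates exactly how the error in the approximated solution feeds into the approximated memory.

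For that second piece, the key observation is that $\hat X(t_j) = X_j$, so $X(t_j) - X_j = \epsilon(t_j)$ and hence
\[
Z_i^B - Z_i = \sum_{j \in \Pi_i} \phi(t_i,t_j)\,\epsilon(t_j)\,\Delta B_j .
\]
Squaring and taking expectations, the off-diagonal terms vanish: for $j < \ell$ in $\Pi_i$ the factor $\phi(t_i,t_j)\phi(t_i,t_\ell)\,\epsilon(t_j)\epsilon(t_\ell)\,\Delta B_j$ is $\mathcal{F}_{t_\ell}$-measurable (the increment $\Delta B_j$ lands in $\mathcal{F}_{t_{j+1}}\subseteq\mathcal{F}_{t_\ell}$ and $\epsilon$ is adapted), while $\Delta B_\ell$ is independent of $\mathcal{F}_{t_\ell}$ with mean zero, so conditioning on $\mathcal{F}_{t_\ell}$ kills it. For the diagonal terms, $\epsilon(t_j)$ is $\mathcal{F}_{t_j}$-measurable and $(\Delta B_j)^2$ is independent of $\mathcal{F}_{t_j}$ with $E[(\Delta B_j)^2]=\Delta t$, giving
\[
E[|Z_i^B - Z_i|^2] = \Delta t \sum_{j \in \Pi_i} \phi(t_i,t_j)^2\, E[\epsilon(t_j)^2].
\]
Alternatively one can obtain the same thing in one stroke by writing $Z(t_i)-Z_i$ as a single It\^{o} integral over $[t_i-\delta,t_i]$ with piecewise integrand, applying the It\^{o} isometry once, and splitting the integrand on $[t_j,t_{j+1}]$ as $\bigl(\phi(t_i,s)X(s)-\phi(t_i,t_j)X(t_j)\bigr) + \phi(t_i,t_j)\epsilon(t_j)$.

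Finally, using the boundedness assumption on $\phi$ (assumptions~\eqref{eq: phi_begrenset}--\eqref{eq: phi_lin}) to bound $\phi(t_i,t_j)^2$ and summing over the $O(\delta/\Delta t)$ nodes in $\Pi_i$ yields $E[|Z_i^B - Z_i|^2] \le \tilde K\,\Delta t \sum_{j\in\Pi_i} E[\epsilon(t_j)^2]$; combined with the bound on the first piece this gives exactly the two terms in the statement. The step to watch is the martingale-difference/measurability argument that eliminates the cross terms — specifically, that $\epsilon(t_j)=X(t_j)-\hat X(t_j)$ is $\mathcal{F}_{t_j}$-adapted and that each Brownian increment is independent of everything accumulated up to its left endpoint; the rest is a routine combination of the It\^{o} isometry, Lemma~\ref{lemma: Z_minus_Z_B}, and $(a+b)^2\le 2a^2+2b^2$.
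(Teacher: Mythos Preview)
Your proposal is correct and follows essentially the same route as the paper: split via the intermediate $Z_i^B$, apply $(a+b)^2\le 2a^2+2b^2$, invoke Lemma~\ref{lemma: Z_minus_Z_B} for the first piece, and use the discrete It\^{o} isometry plus the bound on $\phi^2$ for the second. Your martingale-difference argument is just an explicit justification of what the paper calls the ``discrete It\^{o} isometry''; note that both your argument and the paper's proof pick up a factor $2$ on the second term from the splitting, so strictly speaking the bound obtained is $2\tilde K\Delta t\sum_{j\in\Pi_i}E[\epsilon(t_j)^2]$ rather than the stated $\tilde K\Delta t\sum_{j\in\Pi_i}E[\epsilon(t_j)^2]$ --- a harmless constant discrepancy that is present in the paper as well.
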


\begin{proof}
First, note that

\[
 \begin{array}{lll}
  E[|Z_i^B-Z_i|^2] &=& E[(\sum_{j \in \Pi_i} X(t_j) \phi(t_i,t_j) \Delta B_j - \sum_{j \in \Pi_i} X_j \phi(t_i,t_j) \Delta B_j)^2] \\[\smallskipamount]
  %&=& E[(\sum_{j \in \Pi_i} \phi(t_i,t_j) (X(t_j) - X_j) \Delta B_j)^2] \\[\smallskipamount]
  &=& \sum_{j \in \Pi_i} \phi(t_i,t_j)^2 E[(X(t_j) - X_j)^2] \Delta t \\[\smallskipamount]
  &\leq& \tilde{K} \Delta t \sum_{j \in \Pi_i} E[\epsilon(t_j)^2]
 \end{array}
\]
\noindent where the final equality uses the discrete It{\^o} isometry. Therefore,

 \[
  \begin{array}{lll}
   E[|Z(t_i)-Z_i|^2] &=& E[|Z(t_i) - Z_i^B + Z_i^B-Z_i|^2] \\[\smallskipamount]
   &\leq& 2E[|Z(t_i) - Z_i^B|^2] + 2E[|Z_i^B-Z_i|^2] \\[\smallskipamount]
   &\leq& 2\Delta t \delta \tilde{K}  (M  + c)  + \tilde{K} \Delta t \sum_{j \in \Pi_i} E[\epsilon(t_j)^2]
  \end{array}
 \]
\noindent where the first inequality uses inequality \eqref{eq: quad} and the second inequality uses Lemma~\ref{lemma: Z_minus_Z_B}.

\end{proof}

We are nearly ready to prove our main result, Theorem~\ref{thm: Euler_scheme}. However, we need one more lemma:

\begin{lemma}
\label{lemma: exponent}
Let $x >0$ and $n \in \mb{N}$. Then,

\[
(1+\frac{x}{n})^n \leq e^x.
\]
\end{lemma}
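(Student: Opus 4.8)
The plan is to prove the elementary inequality $(1+\frac{x}{n})^n \leq e^x$ for $x > 0$ and $n \in \mb{N}$ by a direct comparison argument, most cleanly via the logarithm. First I would note that since both sides are positive, the claim is equivalent to $n \ln(1 + \frac{x}{n}) \leq x$, i.e. $\ln(1+y) \leq y$ with $y := \frac{x}{n} > 0$. So the entire content reduces to the standard bound $\ln(1+y) \leq y$ for $y > -1$ (here in fact $y > 0$).

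To establish $\ln(1+y) \leq y$, I would set $f(y) := y - \ln(1+y)$ on $[0,\infty)$, observe $f(0) = 0$, and compute $f'(y) = 1 - \frac{1}{1+y} = \frac{y}{1+y} \geq 0$ for $y \geq 0$. Hence $f$ is nondecreasing on $[0,\infty)$, so $f(y) \geq f(0) = 0$, giving $\ln(1+y) \leq y$. Substituting $y = \frac{x}{n}$ and multiplying by $n > 0$ yields $n\ln(1+\frac{x}{n}) \leq x$; exponentiating (the exponential being increasing) gives $(1+\frac{x}{n})^n \leq e^x$, as desired.

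An alternative I might mention in one line is the binomial-expansion route: $(1+\frac{x}{n})^n = \sum_{j=0}^n \binom{n}{j} \frac{x^j}{n^j} = \sum_{j=0}^n \frac{x^j}{j!} \cdot \frac{n!}{(n-j)! \, n^j} \leq \sum_{j=0}^n \frac{x^j}{j!} \leq \sum_{j=0}^\infty \frac{x^j}{j!} = e^x$, using $\frac{n!}{(n-j)!\,n^j} = \prod_{i=0}^{j-1}(1 - \frac{i}{n}) \leq 1$ and $x > 0$. Either argument is completely routine; there is essentially no obstacle here, which is why this is stated as a lemma rather than a proposition. The only mild point of care is ensuring the argument of the logarithm is positive, which holds automatically since $x > 0$ and $n \geq 1$ force $1 + \frac{x}{n} > 1 > 0$. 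I would present the logarithm version as the main proof since it is the shortest and most transparent.
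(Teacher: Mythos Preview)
Your proof is correct and follows essentially the same route as the paper: both reduce the claim to the elementary inequality $1+y \leq e^y$ (equivalently $\ln(1+y)\leq y$) and then substitute $y=x/n$ and pass to the $n$th power. The only cosmetic difference is that the paper justifies $1+y\leq e^y$ in one line via convexity of the exponential (the first-order Taylor approximation lies below the graph), whereas you derive the equivalent logarithmic form by a derivative computation; your alternative binomial-expansion argument is also valid but not used in the paper.
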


\begin{proof}
The exponential function is convex, and therefore it dominates its first order Taylor approximation at $0$, so $e^y \geq 1+y$ for all $y$. By insering $y=x/n$ and taking the $n$'th power, the desired inequality follows.

% NB: removed 17/12-15 for briefer proof:
%First, note that by definition, for $h > 0$,

%\[
%e^h = 1 + h + \frac{h^2}{2!} + \frac{h^3}{3!} + \ldots.
%\]

%This implies that

%\[
%\begin{array}{lll}
%1+h &<& e^h, \mbox{ so} \\[\smallskipamount]
%\ln(1+h) &<& h , \mbox{ i.e.,}\\[\smallskipamount]
%\frac{\ln(1+h)}{h} &<& 1.
%\end{array}
%\]

%Now, this implies that for $x > 0$

%\[
%\begin{array}{lll}
%x &>& x\frac{\ln(1 + \frac{x}{n})}{\frac{x}{n}} \\[\medskipamount]
%&=& n \ln(1 + \frac{x}{n}).
%\end{array}
%\]

%So, $e^x \geq (1 + \frac{x}{n})^n$.
\end{proof}

\noindent Finally, using all of these lemmas, we are ready to prove our main result.

\medskip

\noindent \textbf{Proof of Theorem~\ref{thm: generalized}:}

Recall from Theorem~\ref{thm: Euler_scheme} that we would like to prove that the expected squared error of the numerical scheme is bounded by some constant (depending on the terminal time) times the time step. That is, we want to prove that $E[\epsilon(t_{i})^2] \leq \tilde{C}(T) \Delta t$. By combining inequality \eqref{eq: error_X_mellom} with Lemma~\ref{lemma: X_lipschitz}, Lemma~\ref{lemma: Z_lipschitz} and Lemma~\ref{lemma: error_Z}, we see that

\[
\begin{array}{lll}
 E[\epsilon(t_{i+1})^2] &\leq& E[\epsilon(t_i)^2] + \int_{t_i}^{t_{i+1}} E[\epsilon(t)^2]dt + 4kE[\epsilon(t_i)^2]\Delta t + 2k(\Delta t)^2 \\[\smallskipamount]
 &&+ 4k c \int_{t_i}^{t_{i+1}} (t-t_i)dt + 4k\int_{t_i}^{t_{i+1}} \tilde{N}(t-t_i)dt \\[\smallskipamount]
 &&+ 4k\int_{t_i}^{t_{i+1}}(2\Delta t \delta (M\tilde{K} + \tilde{K}c )  + \tilde{K} \Delta t \sum_{j \in \Pi_i} E[\epsilon(t_j)^2])dt \\[\smallskipamount]
 &=&E[\epsilon(t_i)^2](1+  4k \Delta t) + 2k(\Delta t)^2(1  + c + \tilde{N}+ 2(2 \delta (M \tilde{K} + \tilde{K} c)  \\[\smallskipamount]
 &&+ \tilde{K} \sum_{j \in \Pi_i} E[\epsilon(t_j)^2]) + \int_{t_i}^{t_{i+1}}E[\epsilon(t)^2] dt.
 \end{array}
\]

Hence, by using the Bellman-Gr{\"o}nwall inequality, we see that

\[
\begin{array}{lll}
 E[\epsilon(t_{i+1})^2] &\leq& E[\epsilon(t_i)^2](1+  4k \Delta t) + 2k(\Delta t)^2(1  + c + \tilde{N}+ 2(2 \delta (M \tilde{K} + \tilde{K} c)\\[\smallskipamount]
 &&+ \tilde{K}  \sum_{j \in \Pi_i} E[\epsilon(t_j)^2]) + \int_{t_i}^{t_{i+1}} e^{t_{i+1}-t} \{E[\epsilon(t_i)^2](1+  4k \Delta t) \\[\smallskipamount]
 &&+ 2k(\Delta t)^2(1  + \tilde{N} + c+ 2(2 \delta (M\tilde{K} + \tilde{K}c)+ \tilde{K} \sum_{j \in \Pi_i} E[\epsilon(t_j)^2])\} dt \\[\smallskipamount]
 &=& e^{\Delta t}\{E[\epsilon(t_i)^2](1+  4k \Delta t) + 2k(\Delta t)^2(1  + c + \tilde{N}+ 4 \delta (M\tilde{K} + \tilde{K} c)\\[\smallskipamount]
 &&+ 2\tilde{K}  \sum_{j \in \Pi_i} E[\epsilon(t_j)^2]) \}
\end{array}
 \]

For $i=1, \ldots,N$, define $a_i := E[\epsilon(t_i)^2]$. From the previous computations, we know that

\begin{equation}
\label{eq: a_i}
a_{i+1} \leq R a_i + S \sum_{j \in \Pi_i} a_j + A
\end{equation}

\noindent where
\[
R:=e^{\Delta t}(1 + 4k\Delta t) > 0, \mbox{ } S:= 4 \tilde{K} k (\Delta t)^2 e^{\Delta t} > 0
 \]

 \noindent and
 \[
 A:= 2k(\Delta t)^2 e^{\Delta t}(1  + c + \tilde{N} + 4 \delta (M\tilde{K} + \tilde{K} c)) > 0.
 \]

Note that,

\begin{equation}
\label{eq: for_indusksjon}
\begin{array}{lll}
a_{i+1} &\leq& Ra_i + A + S\sum_{j \in \Pi_i} a_j \\[\smallskipamount]
&\leq& R \max_{j \in \Pi_i} a_j + S \sum_{j \in \Pi_i} \max_{j \in \Pi_i} a_j + A \\[\smallskipamount]
&=& (R + S \frac{\delta}{\Delta t})\max_{j \in \Pi_i} a_j +A \\[\smallskipamount]
&=:& \tilde{R} \max_{j \in \Pi_i} a_j +A
\end{array}
\end{equation}
\noindent where $\tilde{R} := R +  S \frac{\delta}{\Delta t} = e^{\Delta t} (1 + 4k \Delta t (1+\delta \tilde{K}))$.

By induction, and the fact that the initial approximation error is $0$, inequality~\eqref{eq: for_indusksjon} implies that $a_n \leq \tilde{A} \frac{\tilde{R}^n -1}{\tilde{R} - 1}$ for $n=1, \ldots, N$, i.e.,

\begin{equation}
\label{eq: nesten_ferdig}
\begin{array}{lll}
E[\epsilon(t_n)^2] &\leq& 2e^{\Delta t} k (\Delta t)^2 (1 + c + \tilde{N} + 4\delta (M\tilde{K} + \tilde{K} c)) \frac{e^{N\Delta t}(1 +4k(1 +\delta \tilde{K}) \Delta t)^n - 1}{e^{\Delta t}(1 + 4k(1 +\delta \tilde{K})  \Delta t) -1} \\[\smallskipamount]
&\leq& \Delta t \frac{e^T}{2(1+\delta \tilde{K})}(1 + 4k (1 +\delta \tilde{K}) \Delta t)^n (1 + c + k + 4 \delta (M \tilde{K} + \tilde{K} c)).
\end{array}
\end{equation}

Now, note that $e^{4k(1 +\delta \tilde{K})T} \geq (1+\frac{4k(1 +\delta \tilde{K})T}{n})^n$ because of Lemma~\ref{lemma: exponent}. By combining Lemma~\ref{lemma: exponent} with the inequality~\eqref{eq: nesten_ferdig} and recalling that $\Delta t = \frac{T}{N}$, we reach our goal

\[
\begin{array}{lll}
E[\epsilon(t_n)^2] &\leq& \Delta t \frac{e^{T(1+4k(1 +\delta \tilde{K}) )}}{2 (1+\delta \tilde{K})}(1 + c + \tilde{N} + 4\delta (M \tilde{K} + \tilde{K} c)) \\[\smallskipamount]
&=:& \Delta t \tilde{C}(T)
\end{array}
\]
\noindent where $\tilde{C}(T):=\frac{e^{T(1+4k(1 +\delta \tilde{K}) )}}{2(1+\delta \tilde{K})}(1 + c + \tilde{N} + 4\delta (M \tilde{K} + \tilde{K} c))$. This completes the proof of Theorem~\ref{thm: Euler_scheme}. \QED

\section{A noisy memory SDE with an analytical solution and a numerical example}
\label{sec: num_ex}

In this section, we will compare the exact solution of a (very simple) SDE with noisy memory to the approximation given by the Euler method. We consider the following SDE with noisy memory:

\begin{equation}
\label{eq: num_ex_simple_SDE}
\begin{array}{lll}
dX(t) &=& Z(t)dB(t) \mbox{ for } t \in [0,T] \\[\smallskipamount]
X(t) &=& 1, \mbox{ } t \in [-\delta,0).
\end{array}
\end{equation}

\noindent where $Z(t) := \int_{t-\delta}^t X(s) dB(s)$, so $\phi(t,s)=1$ for all $t,s \in [0,T]$. We can solve \eqref{eq: num_ex_simple_SDE} analytically by using a technique from Dahl et al. \cite{Dahl}, based on rewriting the noisy SDE~\eqref{eq: num_ex_simple_SDE} as a two-dimensional SDE with delay. This kind of delay equation can be solved iteratively for each $\delta$-interval.

First, we rewrite the noisy SDE~\eqref{eq: num_ex_simple_SDE} by defining $X_1(t) :=X(t)$ and $X_2(t):=\int_{-\delta}^t X_1(s)dB(s)$,  $t \in [-\delta,T]$. Note that from these definitions, $Z(t) = X_2(t)-X_2(t-\delta)$, $t \in [-\delta,T]$. Then, the noisy SDE~\eqref{eq: num_ex_simple_SDE} can be rewritten as a two-dimensional SDE with delay:

\begin{equation}
\label{eq: rewritten_SDE}
\begin{array}{llll}
dX_1(t)&=& (X_2(t)-X_2(t-\delta))dB(t), t \in (0,T], \\[\smallskipamount]
dX_2(t)&=&X_1(t)dB(t), t \in (-\delta,T], \\[\smallskipamount]
X_1(t)&=&1, t \in [-\delta,0], \\[\smallskipamount]
X_2(-\delta)=0.
\end{array}
\end{equation}

Note that $X_1(t)$ and $X_2(t)$ are known from the initial conditions for $t \in [-\delta,0]$. We write \eqref{eq: rewritten_SDE} in matrix form. Define $Y(t) := (X_1(t),  X_2(t))^{\intercal}$ (where $(\cdot,\cdot)^{\intercal}$ denotes the transpose), $t \in [-\delta,T]$. Then, from \eqref{eq: rewritten_SDE}

\begin{equation}
\label{eq: rewritten_matrix}
\begin{array}{lll}
dY(t) &=& \textbf{a} Y(t) dB(t) + \textbf{b} Y(t-\delta)dB(t), t \in [0,T]\\[\smallskipamount]
Y(t) &=& (1, B(t)-B(-\delta))^{\intercal}, t \in [-\delta,0],
\end{array}
\end{equation}

\noindent where $\textbf{a}$ and $\textbf{b}$ are in $\mb{R}^{2 \times 2}$ and defined by

\[
\textbf{a}=  \begin{bmatrix}
        0 & 1 \\ 1 & 0
    \end{bmatrix}, \mbox{ }
\textbf{b}=  \begin{bmatrix}
        0 & -1 \\ 0 & 0
    \end{bmatrix}, \mbox{ }
\]

For $t \in [0,\delta]$, we may rewrite \eqref{eq: rewritten_matrix} as

\[
\begin{array}{lll}
dY(t) &=& \textbf{a} Y(t)dB(t) + \begin{bmatrix}  B(-\delta)-B(t-\delta) \\ 0 \end{bmatrix} dB(t),
\end{array}
\]

\noindent which is a regular SDE without delay. For notational simplicity, define $K(t-\delta):=B(t-\delta)-B(-\delta)$. Note that for $t \in [0,\delta]$, $K(t-\delta)$ is a known process which is independent of everything after time $0$.

To solve this equation, define

\[
F(t) := \exp(-\textbf{a}B(t) + \frac{1}{2}\textbf{a}^2 t)
\]
\noindent where we (in general) define the matrix exponential for a matrix $\textbf{A} \in \mb{R}^{n \times n}$, $n \in \mb{N}$, as

\[
\exp(\textbf{A}) = \sum_{n=0}^{\infty} \frac{1}{n!} \textbf{A}^n.
\]

By this definition, we find (by analyzing the infinite sum) that

\[
\begin{array}{lll}
 F(t)=e^{\frac{1}{2}t} \begin{bmatrix} \cosh B(t) & -\sinh B(t) \\ -\sinh B(t) & \cosh B(t) \end{bmatrix}.
\end{array}
\]

Note also that
\[
\textbf{a}^2=\begin{bmatrix} 1 & 0 \\ 0 & 1 \end{bmatrix},
\]
\noindent which clearly commutes with the matrices $F(t)$ (for all $t$) and $\textbf{a}$. Also, the matrices $\textbf{a}$ and $F(t)$ commute (for all $t$). This justifies the following calculations:

By the two-dimensional It{\^o} formula,

\[
\begin{array}{lll}
dF(t) &=& F(t)(\textbf{a}^2 dt - \textbf{a} dB(t)).
\end{array}
\]

Hence, by the It{\^o} product rule,

\[
d(F(t)Y(t))= F(t)\begin{bmatrix} -K(t-\delta) \\ 0 \end{bmatrix}dB(t) - \textbf{a} F(t) \begin{bmatrix} -K(t-\delta) \\ 0 \end{bmatrix} dt.
\]

By integrating between times $0$ and $t$,

\begin{equation}
\label{eq: solution_SDE_exact}
 \begin{array}{lll}
  Y(t) &=& e^{\textbf{a} B(t) - \frac{1}{2} \textbf{a}^2 t} \Big( Y(0) + \int_0^t F(s) \begin{bmatrix} -K(s-\delta) \\ 0 \end{bmatrix} dB(s) \\[\smallskipamount]
  &&- \int_0^t \textbf{a} F(s) \begin{bmatrix} -K(s-\delta) \\ 0 \end{bmatrix} ds \Big)
 \end{array}
\end{equation}
\noindent where $Y(0) = (1, -B(-\delta))^{\intercal}$.

The first component of this solution $Y(t)$ is the exact solution $X(t)$ of the noisy memory SDE~\eqref{eq: num_ex_simple_SDE} for times $t \in [0, \delta]$. Furthermore, one can continue and iteratively solve \eqref{eq: num_ex_simple_SDE} for the interval $[\delta,2\delta]$ using the solution based on~\eqref{eq: solution_SDE_exact} as an initial condition. By continuing like this, one can solve the equation on the entire interval $[0,T]$. We will not calculate more solutions, as the one calculated above is sufficient for our goal of illustrating the Euler method.

We now compare the exact solution just derived to the numerical approximation based on the Euler method. Let $\delta=1$ and $T=1$. It would perhaps be more realistic to choose $T$ larger than $\delta$ (i.e., the time span of interest is greater than the time of memory). However, as the previous exact solution gets very complicated for $\delta < T$, we restrict ourselves to the case $\delta=T$. Figure \ref{fig: plot} shows $1000$ different simulations of the paths of the exact solution~\eqref{eq: solution_SDE_exact} have been plotted against the corresponding paths of the approximated Euler solution using time steps of size $\Delta t= 1/100$. In addition, the corresponding mean square error has been computed by Monte Carlo simulation (with these $1000$ simulations), and this error has also been plotted. As seen by the dashed line (representing the mean square error of the Euler approximation method) in Figure \ref{fig: plot}, the Euler method approximates the exact solution well in a mean square sense.

\begin{figure}[h!]
    \centering
    \includegraphics[width=1.1\textwidth]{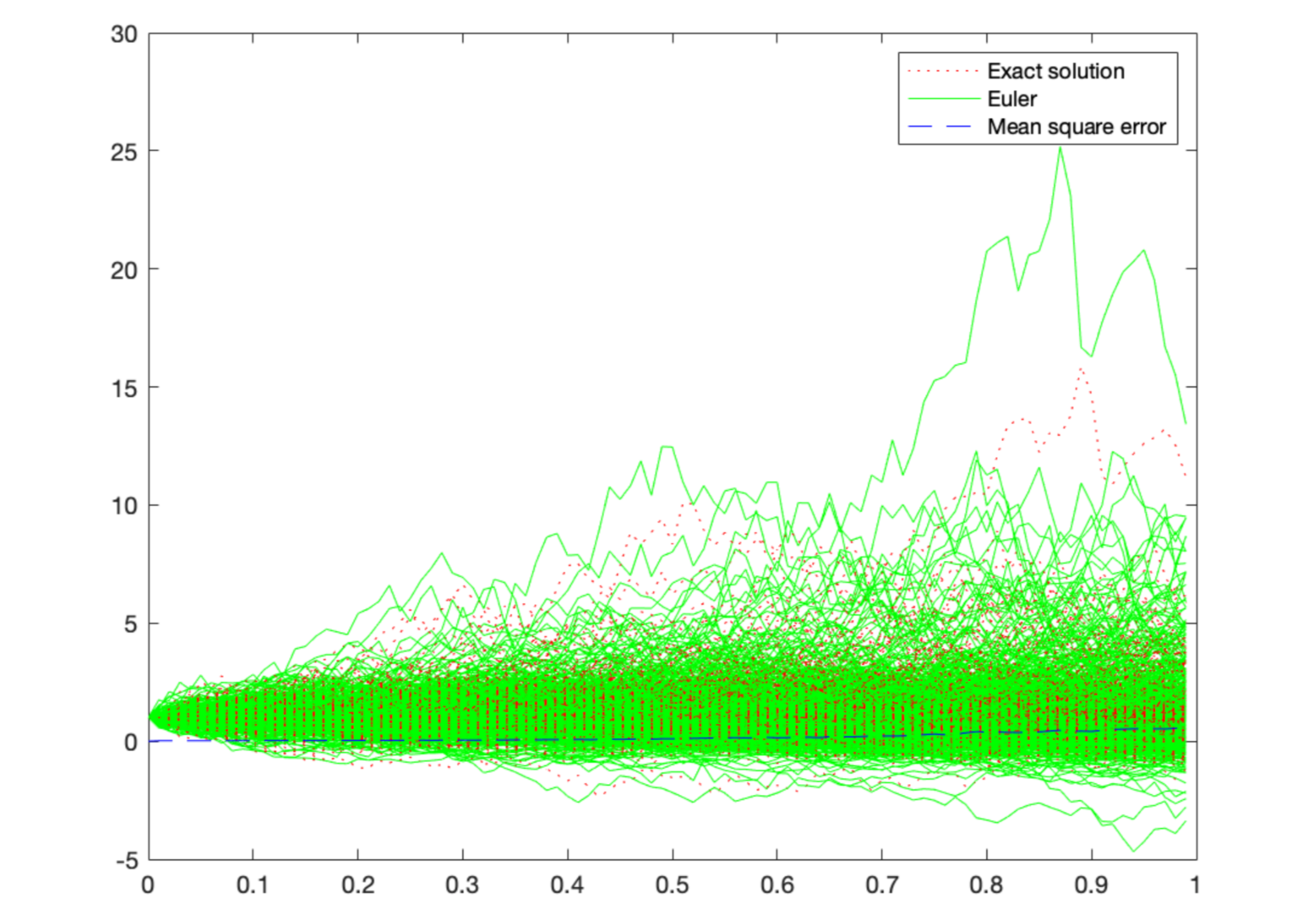}
    \caption{Plot of $1000$ paths of the exact solution of noisy SDE and the corresponding Euler approximation paths as well as the mean square error computed by Monte Carlo simulation.}
    \label{fig: plot}
\end{figure}

\end{document}